\theoremstyle{plain}
\newtheorem{thm}{Theorem}[section]
\newtheorem{lem}[thm]{Lemma}
\newtheorem{coro}[thm]{Corollary}
\newtheorem{propo}[thm]{Proposition}
\newtheorem{defi}[thm]{Definition}
\newtheorem{rema}[thm]{Remark}
\theoremstyle{remark}
\newcommand{\cC}{\mathcal C}
\newcommand{\cK}{\mathcal K}
\newcommand{\cM}{\mathcal M}
\newcommand{\cO}{\mathcal O}
\newcommand{\Z}{\mathbb{Z}}
\newcommand{\Q}{\mathbb{Q}}
\newcommand{\C}{\mathbb{C}}
\newcommand{\M}{\mathscr{M}}
\newcommand{\hotimes}{\widehat{\otimes}}
\newcommand{\cotimes}{\underline\otimes}
\newcommand{\id}{{\rm id}}
\newcommand{\Ab}{{\rm Ab}}
\renewcommand{\top}{\rm top}
\renewcommand{\inf}{\rm inf}
\newcommand{\alg}{\rm alg}
\newcommand{\dif}{\rm dif}
\newcommand{\nil}{\rm nil}
\renewcommand{\top}{\rm top}
\newcommand{\diag}{\mathrm{diag}}
\newcommand{\iso}{\overset{\cong}{\longrightarrow}}
\def\Zz{\mathbb{Z}}
\def\fC{\mathfrak{C}}
\newcommand{\LC}{\mathfrak{LC}}
\begin{document}
\title{Algebraic $K$-theory and properly infinite $C^*$-algebras}

\author{Guillermo Corti\~{n}as}
\address{Guillermo Corti\~{n}as, Dep.\  Matem\'{a}tica-IMAS \\
 Ciudad Universitaria Pab 1\\
 (1428) Buenos Aires, Argentina.}
\email{gcorti@dm.uba.ar}
\urladdr{http://mate.dm.uba.ar/\~{}gcorti}
\thanks{Corti\~{n}as' research was supported by Conicet
 and partially supported by grants MTM2012-36917-C03-02,
 UBACyT 20020100100386,
 and PIP 11220110100800.}
\author{N.~Christopher Phillips}
\address{Department of Mathematics, University of Oregon,
      Eugene OR 97403-1222, USA,
      and Department of Mathematics, University of Toronto,
      Room 6290, 40 St.\  George St., Toronto ON M5S 2E4, Canada.}
\email{ncp@darkwing.uoregon.edu}
\urladdr{http://pages.uoregon.edu/ncp/}
\thanks{Phillips' research was partially supported by
  the US National Science Foundation under
  Grants DMS~0302401, DMS-0701076, and DMS-1101742.
  It was also partially supported by the Centre de Recerca
  Matem\`{a}tica (Barcelona) through a research visit conducted
  during 2011,
  and by the Research Institute for Mathematical Sciences
  of Kyoto University through a visiting professorship
  in 2011--2012.}
\date{12~Feb.\  2014}

\subjclass[2010]{Primary 19D50, 46L80; Secondary 19D25, 46H99.}

\begin{abstract}
We show that several known results about the algebraic $K$-theory
of tensor products of algebras
with the $C^*$-algebra of compact operators in Hilbert space
remain valid for tensor products
with any properly infinite $C^*$-algebra.
\end{abstract}

\maketitle

\section{Introduction}

Let $\fC^*$ and $\Ab$ be the categories
of $C^*$-algebras and of abelian groups.
Let $\cO$ be a properly infinite $C^*$-algebra,
let $\cK = \cK (\ell^2 (\Z_{\ge 0}))$
be the $C^*$-algebra of compact operators,
let $(e_{j, k})_{j, k \in \Z_{\ge 0}}$
be the standard system of matrix units for~$\cK$,
and let $\cotimes$ be the spatial tensor product.
Consider the corner embedding $j \colon \cO \to \cO \cotimes \cK$,
given by $j (a) = a \cotimes e_{0, 0}$ for $a \in \cO$.
We show in Proposition~\ref{basic}
that if $E \colon \fC^* \to \Ab$ is an $M_2$-stable functor
then $E (j)$ is an isomorphism
\begin{equation}\label{intro:basic}
E (j) \colon E (\cO) \iso E (\cO \cotimes \cK).
\end{equation}

We use this to show that several known results
concerning the algebraic $K$-theory
of tensor products of algebras with $\cK$
remain valid for tensor products with $\cO$.
In these results,
and throughout the paper,
all topological vector spaces
(in particular, locally convex algebras) are assumed complete.
Also, $K_*$ denotes algebraic $K$-theory,
and $K_*^{\top}$ denotes a suitable version of topological $K$-theory
(the usual one when restricted to Banach algebras).
For example we prove in Theorem~\ref{karc*}
that if $A$ is a $C^*$-algebra then the comparison map is an isomorphism
\begin{equation}\label{intro:karc*}
K_* (A \cotimes \cO)\iso K_*^{\top} (A \cotimes \cO).
\end{equation}
In fact this is immediate from \eqref{intro:basic}
and the Karoubi-Suslin-Wodzicki theorem (\cite{karcomp}, \cite{sw}),
according to which
$K_* (A \cotimes \cK) \to K^{\top}_* (A \cotimes \cK)$ is an isomorphism.
We also prove that if $L$ is a
locally multiplicatively convex Fr\'{e}chet algebra
with a uniformly bounded one-sided approximate identity
and $\hotimes$ is the projective tensor product,
then the comparison map
\[
K_* (L \hotimes \cO) \to K_*^{\top} (L \hotimes \cO)
\]
is an isomorphism.
The analogous result for $L \hotimes \cK$ is \cite{ctc}*{Theorem 8.3.3}
(see also \cite{friendly}*{Theorem 12.1.1}).

We prove in Theorem~\ref{agree}
that if $L$ is a locally convex algebra then the various possible definitions of topological $K$-theory for locally convex algebras
all agree on $L \hotimes \cK$,
and moreover they coincide with Weibel's homotopy algebraic $K$-theory:
\begin{equation}\label{intro:khtop}
KH_* (L \hotimes \cO)\cong K^{\top}_* (L \hotimes \cO).
\end{equation}
The analog of \eqref{intro:khtop} for $L \hotimes \cK$ follows from \cite{ctc}*{Theorem 6.2.1}.
Further, let $HC_*$ be algebraic cyclic homology of $\Q$-algebras.
Then we show in Theorem \ref{seq} that there is a six term exact sequence
\[
\xymatrix{ K^{\top}_{1} (L \hotimes \cO)\ar[r]
 & HC_{2 n - 1} (L \hotimes \cO)\ar[r]
 & K_{2 n} (L \hotimes \cO)\ar[d]
\\
K_{2 n - 1} (L \hotimes \cO) \ar[u]
 & HC_{2 n - 2} (L \hotimes \cO) \ar[l]
 & K_0^{\top} (L \hotimes \cO). \ar[l]}
\]
The corresponding statement for $L \hotimes \cK$
is a particular case of \cite{ctc}*{Theorem 6.3.1}.

Finally we show, in Theorem \ref{khseq},
that if $A$ is any $\C$-algebra
and $\otimes_{\C}$ denotes the algebraic tensor product over~$\C,$
then
\[
KH_n (A \otimes_{\C} \cO) =
\begin{cases}
 K_0 (A \otimes_{\C} \cO) & {\mbox{$n$ even}}
 \\
 K_{-1} (A \otimes_{\C} \cO) & {\mbox{$n$ odd}},
\end{cases}
\]
and there is a six term exact sequence
\[
\xymatrix{
K_{-1} (A \otimes_{\C} \cO) \ar[r]
 & HC_{2 n - 1} (A \otimes_{\C} \cO)\ar[r]
 & K_{2 n} (A \otimes_{\C} \cO) \ar[d] \\
K_{2 n - 1} (A \otimes_{\C} \cO) \ar[u]
 & HC_{2 n - 2} (A \otimes_{\C} \cO) \ar[l]
 & K_{0} (A \otimes_{\C} \cO). \ar[l]
}
\]
The analogous statement for $A \otimes_{\C} \cK$
is a particular case of \cite{ctc}*{Theorem 7.1.1}.

\bigskip

The first version of this paper and all its results date back to 2007.
Although we have both lectured on these results since then,
we had not until now distributed the manuscript.
Motivated by the recent article \cite{sm},
where a particular case of~\eqref{intro:karc*} is proved,
we decided to make our paper publicly available.

We are grateful to George Elliott for useful discussions.

\section{Properly infinite algebras and stability}

If $A$ is any ring, we write
\[
\iota_A \colon A \to M_2 (A) 
\]
for the canonical inclusion into the upper left corner,
given by $\iota_A (a) = \diag (a, 0)$ for $a \in A$.
When $A$ is understood, we just write~$\iota$.

\begin{defi}\label{M2Stab}
We say that a functor $E \colon \fC^* \to \Ab$
from the category of $C^*$-algebras
to the category of abelian groups
is {\emph{$M_2$-stable}}
if $E (\iota)$ is an isomorphism for every $C^*$-algebra~$A$.
\end{defi}

Note that this definition makes sense for other categories of
rings and algebras,
such as the category $\LC$ of complete locally convex algebras.

\begin{propo}\label{basic}
Let $E \colon \fC^* \to \Ab$ be a functor.
Assume that $E$ is $M_2$-stable.
Let $\cO$ be a properly infinite $C^*$-algebra
and let $\cK = \cK (\ell^2 (\Z_{\ge 0}))$
be the $C^*$-algebra of compact operators.
Then $E$ maps the inclusion $j \colon \cO \to \cO \cotimes \cK$,
defined by
$j (a) = a \cotimes e_{0, 0}$ for $a \in \cO$,
to an isomorphism.
\end{propo}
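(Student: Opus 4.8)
The plan is to build an explicit two‑sided inverse to $E(j)$. The idea is that proper infiniteness of $\cO$ provides an infinite family of isometries with pairwise orthogonal ranges, which lets one ``fold'' the completed algebra $\cO\cotimes\cK$ back into $\cO$; both composites of the resulting map with $j$ turn out to be conjugations by isometries, and an $M_2$-stable functor must send such conjugations to the identity.

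First I would use proper infiniteness to fix isometries $s_1,s_2\in\cO$ with $s_1^*s_2=0$, and set $t_n=s_1^{\,n}s_2$ for $n\ge 0$; then $t_m^*t_n=\delta_{mn}\,1$, so the range projections $t_nt_n^*$ are pairwise orthogonal. Reading $\cO\cotimes\cK$ as the compact operators $\cK_{\cO}(H_{\cO})$ on the standard Hilbert $\cO$-module $H_{\cO}=\ell^2(\Z_{\ge0})\cotimes\cO$, the formula $R(\delta_n\cotimes a)=t_na$ defines an adjointable isometry $R$ with $R^*R=1$ and $RR^*=\sum_n t_nt_n^*\le 1$; conjugation $\Phi=\mathrm{Ad}_R$ is then a bounded $*$-homomorphism $\Phi\colon\cO\cotimes\cK\to\cO$ determined on the algebraic tensor product by $\Phi(a\cotimes e_{j,k})=t_j a t_k^*$. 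This is where proper infiniteness does its real work: it is exactly the input needed to produce $\Phi$ on the completed algebra, something that fails for a general $\cO$ precisely because $E$ is not assumed to commute with the inductive limit $\cK=\varinjlim M_{2^n}$.

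Next I would compute the two composites. Writing $W\colon\cO\to H_{\cO}$ for the isometry $W(a)=\delta_0\cotimes a$, one has $j=\mathrm{Ad}_W$, hence $\Phi\circ j=\mathrm{Ad}_{RW}=\mathrm{Ad}_{t_0}$ with $t_0\in\cO$ an isometry, while $j\circ\Phi=\mathrm{Ad}_{WR}$ with $WR\in M(\cO\cotimes\cK)$ an isometry (indeed $(WR)^*WR=R^*R=1$). Thus both composites are conjugations by isometries, one in $\cO$ and one in the multiplier algebra of $\cO\cotimes\cK$.

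The final step, and the main obstacle, is the stability lemma: an $M_2$-stable functor sends conjugation by any isometry $s$ (with $s^*s=1$) to the identity. The mechanism I would use is the dilation $u=\bigl(\begin{smallmatrix}s&1-ss^*\\0&s^*\end{smallmatrix}\bigr)$, a unitary over the multiplier algebra satisfying $\mathrm{Ad}_u\circ\iota=\iota\circ\mathrm{Ad}_s$; since $E(\iota)$ is an isomorphism, invariance of $E$ under the inner automorphism $\mathrm{Ad}_u$ forces $E(\mathrm{Ad}_s)=\mathrm{id}$. The genuinely nonformal point, which must be established (or cited) beforehand, is precisely that an $M_2$-stable functor is invariant under inner automorphisms; the dilation then upgrades this from unitaries to isometries. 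Granting it, we obtain $E(\Phi)\circ E(j)=E(\mathrm{Ad}_{t_0})=\mathrm{id}_{E(\cO)}$ and $E(j)\circ E(\Phi)=E(\mathrm{Ad}_{WR})=\mathrm{id}_{E(\cO\cotimes\cK)}$, so $E(j)$ is an isomorphism with inverse $E(\Phi)$.
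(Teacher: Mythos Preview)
Your argument is correct and follows the same strategy as the paper: use the orthogonal isometries supplied by proper infiniteness to fold $\cO\cotimes\cK$ back, then appeal to the fact that an $M_2$-stable functor sends conjugation by an isometry in a multiplier algebra to the identity (the paper cites this as \cite{friendly}*{Proposition 2.2.6}, which is exactly the ``nonformal point'' you flag). The only packaging difference is that the paper aims its folding map at $M_2(\cO)$ rather than at $\cO$, so that one composite is literally the corner embedding $\iota$; this buys injectivity of $E(j)$ directly from the $M_2$-stability hypothesis and leaves the conjugation lemma to be invoked only once (for the isometry $v\in M(\cO\cotimes\cK)$), whereas your version invokes it twice. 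Conversely, your route produces an explicit two-sided inverse $E(\Phi)$ to $E(j)$, which is a bit cleaner to state.
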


\begin{proof}
Because $\cO$ is properly infinite,
it contains a sequence $(u_n)_{n \in \Z_{> 0}}$
of isometries with orthogonal range idempotents;
write $u = (u_1, u_2, \dots) \in \cO^{1 \times \infty}$
for the corresponding infinite row.
Represent an element $a \in \cO \cotimes \cK$ as an infinite matrix
$a = (a_{j, k})_{j, k \in \Z_{\ge 0}}$.
For such an element~$a$,
set
\[
a_{0, +} = (a_{0, k})_{k \in \Z_{> 0}},
\quad
a_{+, 0} = (a_{j, 0})_{j \in \Z_{> 0}},
\quad {\mbox{and}} \quad
a_{+, +} = (a_{j, k})_{j, k \in \Z_{> 0}}.
\]
Thus
\[
a = \left[ \begin{matrix}
 a_{0, 0} & a_{0, +}  \\
 a_{+, 0} & a_{+, +}
 \end{matrix} \right].
\]
Define a $*$-homomorphism $\phi \colon \cO \cotimes \cK \to M_2 (\cO)$
by
\[
\phi(a)
 = \left[ \begin{matrix}
 a_{0, 0} & a_{0,+}u^*  \\
 u a_{+,0}& u a_{+,+} u^*
 \end{matrix} \right]
\]
for $a \in \cO \cotimes \cK$.
By construction,
the diagram
\[
\xymatrix{
\cO \cotimes \cK\ar[r]^{\phi} & M_2 (\cO)
\\
  & \cO \ar[ul]_j \ar[u]^{\iota}
}
\]
commutes.
Since $E (\iota)$ is an isomorphism by hypothesis,
it follows that $E (j)$ is injective and that $E (\phi)$
is surjective; it remains to show that $E (j)$ is surjective,
or equivalently
that $E (\phi)$ is injective.

For any $C^*$-algebra~$A$,
we denote by $M (A)$ its multiplier algebra.
Consider
the partial isometry
\[
v = \left[ \begin{matrix}
        1    &  0     &  0    & \cdots \\
        0    &  u_1   &  u_2  & \cdots \\
        0    &  0     &  0    & \cdots \\
      \vdots & \vdots & \vdots & \ddots
  \end{matrix} \right] 
  = \left[ \begin{matrix} 1 & 0 \\ 0 & u \\ 0 & 0  \end{matrix} \right]
  \in M (\cO \cotimes \cK).
\]
Define a homomorphism
\[
\psi \colon \cO \cotimes \cK \to \cO \cotimes \cK
\]
by $\psi (a) = v a v^*$ for $a \in \cO$.
We have $E (\psi) = \id_{E (\cO \cotimes \cK)}$
(for example, by \cite{friendly}*{Proposition 2.2.6},
taking $B$ there to be $M (\cO \cotimes \cK)$).
Let $\kappa \colon M_2 (\cO) \to \cO \cotimes \cK$
by the $*$-homomorphism defined by
\[
\kappa \left( \left[ \begin{matrix}
 a_{0, 0} & a_{0, 1}  \\
 a_{1, 0} & a_{1, 1}
 \end{matrix} \right] \right)
 = a_{0, 0} \otimes e_{0, 0}
   + a_{0, 1} \otimes e_{0, 1}
   + a_{1, 0} \otimes e_{1, 0}
   + a_{1, 1} \otimes e_{1, 1}.
\]
Then the following diagram commutes:
\[
\xymatrix{
\cO \cotimes \cK \ar[dr]^{\psi} \ar[r]^{\phi}
 & M_2 (\cO) \ar[d]^{\kappa}
 \\
 & \cO \cotimes \cK.}
\]
It follows that $E (\phi)$ is injective; this concludes the proof.
\end{proof}

Recall that a functor of $C^*$-algebras is {\emph{homotopy invariant}}
if it sends two homotopic homomorphisms to the
same homomorphism.

\begin{coro}\label{hig_proper}
Let $E$ and $\cO$ be as in Proposition~\ref{basic},
and further
assume that $E$ is split exact.
Then $E ({-} \cotimes \cO)$ is homotopy invariant.
\end{coro}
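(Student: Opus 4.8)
The plan is to recognize the statement as an instance of Higson's homotopy invariance theorem, which asserts that a split exact, stable functor from $C^*$-algebras to abelian groups is automatically homotopy invariant. Write $F = E({-} \cotimes \cO) \colon \fC^* \to \Ab$ for the functor under consideration; it is functorial via $f \mapsto f \cotimes \id_\cO$. It therefore suffices to check that $F$ is split exact and that it is \emph{stable}, in the sense that for every $C^*$-algebra~$A$ the corner inclusion $A \to A \cotimes \cK$ induces an isomorphism $F(A) \iso F(A \cotimes \cK)$.

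For stability, fix $A$ and consider the auxiliary functor $E_A \deq E(A \cotimes {-}) \colon \fC^* \to \Ab$. The canonical identification $A \cotimes M_2(B) \cong M_2(A \cotimes B)$ carries $\id_A \cotimes \iota_B$ to $\iota_{A \cotimes B}$, so $E_A(\iota_B) = E(\iota_{A \cotimes B})$ is an isomorphism because $E$ is $M_2$-stable; hence $E_A$ is itself $M_2$-stable. Applying Proposition~\ref{basic} to $E_A$ and the properly infinite algebra $\cO$ shows that $j \colon \cO \to \cO \cotimes \cK$ induces an isomorphism
\[
E(A \cotimes \cO) = E_A(\cO) \ton{E_A(j)} E_A(\cO \cotimes \cK) = E(A \cotimes \cO \cotimes \cK).
\]
Composing with the isomorphism induced by the flip $A \cotimes \cK \cotimes \cO \cong A \cotimes \cO \cotimes \cK$ identifies $E_A(j)$ with the map $F(A) \to F(A \cotimes \cK)$ induced by the corner inclusion, so $F$ is stable.

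For split exactness, start from a split exact sequence $0 \to I \to A \ton{\pi} B \to 0$ in $\fC^*$ with a $*$-homomorphism section $\sigma \colon B \to A$. The minimal tensor product always preserves the injection $I \hookrightarrow A$, and since $\sigma$ is in particular a completely positive contractive lifting the extension is semisplit, so $-\cotimes \cO$ preserves exactness in the middle as well; the resulting sequence $0 \to I \cotimes \cO \to A \cotimes \cO \to B \cotimes \cO \to 0$ is again split exact, now with section $\sigma \cotimes \id_\cO$. Invoking the split exactness of $E$ on this sequence shows that $F$ carries the original sequence to a split exact sequence of abelian groups, so $F$ is split exact.

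With both properties in hand, Higson's theorem applies to $F$ and gives the desired homotopy invariance. I expect the only delicate point to be the preservation of exactness under $-\cotimes \cO$: this must be justified through the extension being semisplit rather than through any exactness assumption on~$\cO$, since $\cO$ need not be exact. The genuinely substantive ingredient is Higson's theorem itself, and the work above consists precisely in arranging that $F$ satisfies its hypotheses.
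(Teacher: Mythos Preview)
Your proof is correct and follows the same strategy as the paper: show that $F = E({-}\cotimes\cO)$ is $\cK$-stable and split exact, then invoke Higson's homotopy invariance theorem. The paper compresses this into three lines (``The proposition shows that $E({-}\cotimes\cO)$ is $\cK$-stable; since it is also split exact by hypothesis, it is homotopy invariant by Higson's homotopy invariance theorem''), whereas you spell out both verifications---in particular the flip argument identifying $E_A(j)$ with the corner embedding for $F$, and the preservation of split exactness under ${-}\cotimes\cO$. Your caution about exactness of $\cO$ is well placed, though the semisplit machinery is heavier than necessary: once you have a $*$-homomorphic section $\sigma$, the idempotent $*$-endomorphism $\sigma\pi\cotimes\id_\cO$ and the continuous linear retraction $\id - \sigma\pi\cotimes\id_\cO$ onto $I\cotimes\cO$ already force $\ker(\pi\cotimes\id_\cO)=I\cotimes\cO$.
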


\begin{proof}
The proposition shows that $E ({-} \cotimes \cO)$ is $\cK$-stable;
since it is also split exact by hypothesis,
it is homotopy
invariant by Higson's homotopy invariance theorem
\cite{hig}*{Theorem 3.2.2}.
(Note the misprint there:
Definition 2.1.8 should be Definition 2.1.10.)
\end{proof}

In the next corollary,
we use the notion of {\emph{diffotopy invariance}}
(sometimes called diffeotopy invariance)
for functors of locally convex algebras,
taken from the first part of \cite{cut}*{Definition 4.1.1};
note that $A [0, 1]$ there is defined to be the space of smooth
functions from $[0, 1]$ to~$A$.
The definition is essentially the same as for homotopy invariance,
replacing
continuous homotopies with $C^{\infty}$ homotopies
(diffotopies;
in some papers called diffeotopies).
Recall that $\LC$ is the category of complete locally convex algebras.
We require that the multiplication
be jointly continuous
(as in \cite{cut}*{Definition~2.1}),
but we do not require the existence of submultiplicative seminorms.

\begin{coro}\label{lc_hig}
Let $E \colon \LC \to \Ab$
be an $M_2$-stable split exact functor,
and let $\cO$ be a properly infinite $C^*$-algebra.
Then the functor $L\mapsto E (L \hotimes \cO)$ is diffotopy invariant.
\end{coro}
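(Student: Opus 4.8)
The plan is to imitate the proof of Corollary~\ref{hig_proper}, with Higson's theorem replaced by its diffotopy-invariance counterpart for locally convex algebras. Write $F = E({-} \hotimes \cO) \colon \LC \to \Ab$. It suffices to prove that $F$ is split exact and $\cK$-stable, for then $F$ is diffotopy invariant by the locally convex analogue of Higson's theorem \cite{hig}*{Theorem 3.2.2}, which asserts that a split exact, $\cK$-stable functor on $\LC$ is diffotopy invariant (see \cite{ctc}). The formal properties are inherited from $E$: the identification $M_2(B) \hotimes \cO \cong M_2(B \hotimes \cO)$ carries $\iota_B \hotimes \id_{\cO}$ to $\iota_{B \hotimes \cO}$, so $F(\iota_B) = E(\iota_{B \hotimes \cO})$ is an isomorphism and $F$ is $M_2$-stable; and since $-\hotimes\cO$ carries a split extension to a split extension (the algebra splitting $s$ yields the section $s \hotimes \id_{\cO}$, and the underlying linear decomposition persists under $\hotimes$), the split exact $E$ gives a split exact $F$.

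The substance is the $\cK$-stability of $F$, and here I would argue as in Corollary~\ref{hig_proper}, but in the locally convex world. For a fixed $A \in \LC$ the functor $E(A \hotimes {-}) \colon \LC \to \Ab$ is again $M_2$-stable, by the same identification as above. I would then apply to it a locally convex version of Proposition~\ref{basic}, asserting that for an $M_2$-stable functor on $\LC$ and a properly infinite $\cO$ the corner inclusion $\cO \to \cO \hotimes \cK$ is sent to an isomorphism. This gives $E(A \hotimes \cO) \iso E(A \hotimes (\cO \hotimes \cK))$, that is, $F(A) \iso F(A \hotimes \cK)$, which is exactly the $\cK$-stability of $F$.

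It remains to prove this locally convex version of Proposition~\ref{basic}, which I would do by transcribing its proof with the spatial tensor product replaced by the projective one. The maps $\phi$, $\kappa$, the partial isometry $v$, and hence $\psi$, are given by the same formulae. Here $\phi$ is continuous because it factors through the canonical surjection $\cO \hotimes \cK \to \cO \cotimes \cK$ followed by the $*$-homomorphism of Proposition~\ref{basic}, while $\kappa$ is continuous since its image lies in the algebraic tensor product $\cO \otimes \cK$. The identities $\phi \circ j = \iota$ and $\kappa \circ \phi = \psi$ hold on the dense subalgebra $\cO \otimes \cK$ by the very computation that establishes them in the $C^*$-case, and so extend to $\cO \hotimes \cK$ by continuity; the diagram chase deducing injectivity of $E(j)$ and surjectivity of $E(\phi)$ from $M_2$-stability is formal and carries over verbatim. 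The one remaining input is the equality $E(\psi) = \id$, for which I would invoke the $\LC$-analogue of the conjugation lemma \cite{friendly}*{Proposition 2.2.6}.

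The hard part will be the two analytic points hidden in this transcription: verifying that the multiplier $v$ acts continuously on the projective tensor product $\cO \hotimes \cK$, so that $\psi$ is a well-defined continuous endomorphism, and securing the conjugation-invariance lemma \cite{friendly}*{Proposition 2.2.6} in the category $\LC$. I would also confirm at the outset that the $\cK$-stability produced by this argument is stated with the same tensor product and stabilizing algebra that the diffotopy-invariance theorem requires; any mismatch there would have to be resolved within this step. Once these points are settled, the reduction to Proposition~\ref{basic} is purely formal.
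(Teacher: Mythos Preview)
Your plan attacks the problem head-on in $\LC$, but the paper takes a much shorter route that sidesteps every one of the ``hard parts'' you flag. Instead of proving a projective-tensor analogue of Proposition~\ref{basic} and invoking a locally convex Higson theorem, the paper fixes $L\in\LC$ and considers the functor $F\colon\fC^*\to\Ab$, $F(A)=E\big(L\hotimes(A\cotimes\cO)\big)$. This is $M_2$-stable and split exact (because $E$ is and ${-}\cotimes\cO$ preserves split exact sequences of $C^*$-algebras), so Corollary~\ref{hig_proper} applies directly: $F$ is homotopy invariant on $\fC^*$. In particular the two evaluations $L\hotimes\cC([0,1],\cO)\to L\hotimes\cO$ have the same image under~$E$. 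Since the smooth evaluation maps $\cC^\infty([0,1],L\hotimes\cO)\to L\hotimes\cO$ factor through these continuous ones, diffotopy invariance follows immediately. No multipliers on $\cO\hotimes\cK$, no $\LC$-version of Proposition~\ref{basic}, and no locally convex Higson theorem are needed.

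Your approach is not obviously wrong, but the gaps you yourself list are real. The paper only proves that $E(\cO)\to E(\cO\hotimes\cK)$ is a naturally split monomorphism (Corollary~\ref{variant}), not an isomorphism; upgrading this to full $\cK$-stability with the projective tensor product requires exactly the multiplier-action step you worry about, and $v\in M(\cO\cotimes\cK)$ has no evident action on $\cO\hotimes\cK$ since the comparison map $\cO\hotimes\cK\to\cO\cotimes\cK$ need not be injective. You would also have to pin down which stabilizing algebra the diffotopy Higson theorem in \cite{ctc} or \cite{cut} actually demands. The paper's trick of passing to a $C^*$-functor and then factoring smooth homotopies through continuous ones buys you the result with none of this overhead.
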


\begin{proof}
Fix $L \in \LC$ and consider the functor $F \colon \fC^* \to \Ab$
given by $F(A) = E (L \hotimes (A \cotimes \cO))$.
By Corollary \ref{hig_proper},
$F$ is homotopy invariant.
Hence $E$ sends the evaluation maps
$L \hotimes \cC ([0, 1], \, \cO) \to L \hotimes \cO$
at both endpoints to the same map.
It follows that the same is true of the
evaluation maps
$\cC^{\infty} ([0, 1], \, L \hotimes \cO) \to L \hotimes \cO$,
since the latter factor through the former.
\end{proof}

We will also need the following variant of Proposition \ref{basic}.

\begin{coro}\label{variant}
Let $E \colon \LC \to \Ab$ be an $M_2$-stable functor.
Let $k \colon \cO \to \cO \hotimes \cK$
be the usual inclusion,
given by
$a \mapsto a \hotimes e_{0, 0}$ for $a \in \cO$.
Then $E (k)$ is a naturally split monomorphism.
\end{coro}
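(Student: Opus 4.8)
The plan is to reduce the statement to Proposition~\ref{basic} by comparing the projective tensor product with the spatial one, rather than by attempting to reconstruct the maps $\phi$ and $\psi$ of that proof in the locally convex setting. First I would observe that, although $E$ is defined on $\LC$, its restriction to the full subcategory $\fC^*$ of $C^*$-algebras is again $M_2$-stable: $M_2$-stability of $E$ demands that $E(\iota_A)$ be an isomorphism for every $A \in \LC$, and in particular for every $C^*$-algebra~$A$. Hence Proposition~\ref{basic} applies to this restriction and shows that the corner map $j \colon \cO \to \cO \cotimes \cK$, $j(a) = a \cotimes e_{0,0}$, induces an isomorphism $E(j) \colon E(\cO) \iso E(\cO \cotimes \cK)$.

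Next I would introduce the canonical comparison homomorphism. Since the projective cross-norm dominates every $C^*$-cross-norm, the identity on the algebraic tensor product $\cO \otimes_{\C} \cK$ is continuous from the projective topology to the spatial one, and therefore extends to a continuous algebra homomorphism $\lambda \colon \cO \hotimes \cK \to \cO \cotimes \cK$. By construction $\lambda(a \hotimes e_{0,0}) = a \cotimes e_{0,0}$, so $\lambda \circ k = j$. Applying $E$ gives $E(\lambda) \circ E(k) = E(j)$, and since $E(j)$ is invertible, the composite $E(j)^{-1} \circ E(\lambda)$ is a left inverse of $E(k)$; thus $E(k)$ is a split monomorphism. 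For the naturality claim I would note that $j$, $k$, and $\lambda$ are all natural in $\cO$ with respect to $*$-homomorphisms $\cO \to \cO'$ (using functoriality of $\hotimes$ and $\cotimes$ in the first variable), and that applying $E$ and inverting the natural isomorphism $E(j)$ preserves naturality; hence the retraction $E(j)^{-1} \circ E(\lambda)$ is natural, which yields the asserted natural splitting.

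The step that requires the most care is recognizing what \emph{cannot} be proved here: unlike in Proposition~\ref{basic}, one cannot upgrade ``split monomorphism'' to ``isomorphism''. The surjectivity half of that proposition relied on the multiplier algebra $M(\cO \cotimes \cK)$ and the conjugation $\psi(a) = vav^*$ by an infinite partial isometry, a construction tied to the spatial tensor product; there is no reason for $E(\lambda)$ to be an isomorphism, so the argument genuinely terminates at a split monomorphism. The only remaining verification is that $\lambda$ is a well-defined continuous homomorphism, and this is immediate from the maximality of the projective norm, so no delicate continuity estimate involving the row $u$ of isometries is needed.
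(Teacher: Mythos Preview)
Your argument is correct and is exactly the paper's proof, spelled out in full: the paper's one-line proof ``The map $j$ of Proposition~\ref{basic} factors through~$k$'' is precisely your factorization $j = \lambda \circ k$ together with the observation that the restriction of $E$ to $\fC^*$ is $M_2$-stable so that $E(j)$ is invertible. One minor remark: the naturality that actually gets used later (e.g.\ in Theorem~\ref{agree}) is naturality in the functor~$E$---that is, compatibility of the retraction with natural transformations $E_1 \to E_2$---rather than naturality in~$\cO$; but this follows immediately from your formula $E(j)^{-1}\circ E(\lambda)$, since $j$ and $\lambda$ are fixed morphisms.
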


\begin{proof}
The map $j$ of Proposition \ref{basic} factors through~$k$.
\end{proof}

\section{Applications}

In this section we apply the results of the previous one
to study the algebraic $K$-theory
of tensor products of different classes of algebras
with properly infinite $C^*$-algebras.
We essentially show that
the results of \cite{ctc}, \cite{karcomp}, \cite{sw}, and \cite{wodk}
remain valid if we stabilize with properly infinite $C^*$-algebras
instead of the compact operators.

The first application concerns the comparison
between Quillen's algebraic $K$-theory and the usual topological
$K$-theory of $C^*$-algebras.
It is the properly infinite variant of Karoubi's conjecture
for $C^*$-algebras,
proved in \cite{sw} for the $\cK$-stable case.

We need a lemma.
See \cite{friendly}*{Remark 2.1.13}
for why something needs to be done here.
We will use the reasoning of this lemma
for other categories later.

\begin{lem}\label{AlgKM2}
Algebraic $K$-theory of $C^*$-algebras is $M_2$-stable
in the sense of Definition~\ref{M2Stab}.
\end{lem}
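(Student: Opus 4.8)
The plan is to prove the purely algebraic fact that the corner inclusion $\iota = \iota_A \colon A \to M_2(A)$ induces an isomorphism on algebraic $K$-theory for \emph{every} ring $A$; nothing $C^*$-specific is used, which is also why the reasoning will transfer to the other categories considered later. The only genuine point is non-unitality, and this is what \cite{friendly}*{Remark 2.1.13} is warning about: one cannot simply invoke Morita invariance for unital rings, because the unitalization of $M_2(A)$ is $\widetilde{M_2(A)} = M_2(A) \oplus \C$, which is \emph{not} $M_2(\widetilde{A}) = M_2(A) \oplus M_2(\C)$. To sidestep this I would work throughout with the intrinsic relative general linear group of a non-unital ring $B$, namely $GL(B) = \colim_m GL_m(B)$ with $GL_m(B) = \{ 1 + x : x \in M_m(B) \text{ invertible modulo } M_m(B) \}$, and with $K_n(B) = \pi_n BGL(B)^+$ for $n \ge 1$ (handling $K_0$ separately via idempotents over $\widetilde{B}$). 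No choice of unitalization enters this description.

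The first key step is the regrouping isomorphism. Since $M_m(M_2(A)) = M_{2m}(A)$ as (non-unital) rings, one gets $GL_m(M_2(A)) = GL_{2m}(A)$, and passing to the colimit a natural isomorphism $\theta \colon GL(M_2(A)) \iso GL(A)$. The second, and main, step is to compute the composite $\theta \circ GL(\iota)$. Here $\iota$ sends $a$ to $\diag(a, 0)$, so on $1 + x$ with $x \in M_m(A)$ it produces $1 + \iota_*(x)$, and under $M_m(M_2(A)) = M_{2m}(A)$ the matrix $\iota_*(x)$ is $x$ placed in the upper-left corner of the doubled index set. Thus, up to the permutation of indices implicit in the regrouping, $\theta \circ GL(\iota)$ sends $g = 1 + x$ to $\diag(g, I_m)$. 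In other words it is exactly the standard stabilization map $GL(A) \to GL(A)$, the ``extra'' block being the honest identity $I_m = 1 + 0$ rather than something requiring an augmentation computation.

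I would then finish formally. Stabilization induces the identity on the colimit $GL(A)$, and conjugation by the permutation matrix relating the two index orderings is invisible on $K$-theory: it is trivial on $K_1$ because the relevant commutators are elementary (Whitehead's lemma), and trivial on the higher $K_n$ because inner automorphisms induce self-maps homotopic to the identity on $BGL(A)^+$. Hence $\theta \circ GL(\iota)$ is the identity on $K_n(A)$ for $n \ge 1$, and since $\theta$ is an isomorphism, so is $K_n(\iota)$; the same regrouping applied to idempotent matrices gives $K_0$. The step I expect to be the real work is the bookkeeping in the middle paragraph---pinning down that, in the intrinsic relative picture, $\theta \circ GL(\iota)$ is literally stabilization---since once that identification is secured the conclusion is a standard manipulation. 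Finally I would note that, because only functoriality of $GL$ together with its behaviour under regrouping and stabilization is used, the identical argument applies to the topological general linear groups governing the topological $K$-theory of the locally convex and Fr\'{e}chet algebras treated in the later sections.
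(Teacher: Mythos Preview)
There is a genuine gap. Your assertion that $K_n(B) = \pi_n BGL(B)^+$ for the intrinsic $GL(B)$ is not the definition of algebraic $K$-theory for non-unital rings, nor is it a theorem; in fact it fails, and with it your claim that ``nothing $C^*$-specific is used''. Take any nonzero abelian group $A$ regarded as a ring with $A^2 = 0$. Then $GL(A) = 1 + M_\infty(A) \cong (M_\infty(A),+)$ is abelian with trivial perfect radical, so $\pi_1 BGL(A)^+ = M_\infty(A)$. On the other hand the actual definition gives $K_1(A) = \ker\big(K_1(\widetilde{A}) \to K_1(\Z)\big)$; since $GL(\widetilde{A}) \cong M_\infty(A) \rtimes GL(\Z)$ one computes $K_1(A) = (M_\infty(A))_{GL(\Z)} \cong A$ via the trace. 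These disagree. Worse, the same computation for $M_2(A)$ (which also squares to zero, so that $\widetilde{M_2(A)}$ is again a square-zero extension of~$\Z$) yields $K_1(M_2(A)) \cong M_2(A)$, and $K_1(\iota_A)\colon A \to M_2(A)$ is just $a \mapsto \diag(a,0)$, not an isomorphism. Thus $M_2$-stability of algebraic $K$-theory \emph{fails} for general rings; your regrouping isomorphism $GL(M_2(A))\cong GL(A)$, though correct as a statement about groups, does not see this because $K_n$ is not computed from the intrinsic $GL$ alone.

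The paper's route is essentially different and the $C^*$-hypothesis is used in earnest: it invokes the Suslin--Wodzicki excision theorem \cite{sw}*{Corollary~10.4} for $C^*$-algebras to obtain exactness of $K_n$ on the split sequence $0 \to M_2(A) \to M_2(\widetilde{A}) \to M_2(\Z) \to 0$ (which is \emph{not} the unitization sequence for $M_2(A)$), applies unital Morita invariance to $\widetilde{A}$ and~$\Z$, and finishes with the Five Lemma. The later uses of ``the same reasoning'' (for $K^{\inf}_*$ on $\Q$-algebras and for $K_*$ on locally multiplicatively convex Fr\'{e}chet algebras with approximate units) each substitute a category-specific excision theorem at exactly this step; there is no uniform ring-theoretic argument of the kind you propose.
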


\begin{proof}
Let $A$ be a $C^*$-algebra.
Let ${\widetilde{A}}$ be its unitization
as an algebra over $\Zz$.
We then get a commutative diagram with split exact rows:
\[
\xymatrix{
0 \ar[r]
 & A \ar[r] \ar[d]_{\iota_A}
 & {\widetilde{A}} \ar[r] \ar[d]^{\iota_{\widetilde{A}}}
 & \Zz \ar[r] \ar[d]^{\iota_{\Zz}}
 & 0
\\
0 \ar[r]
 & M_2 (A) \ar[r]
 & M_2 ({\widetilde{A}} ) \ar[r]
 & M_2 (\Zz) \ar[r]
 & 0.
}
\]
Apply $K_n$, getting:
\[
\xymatrix{
0 \ar[r]
 & K_n (A) \ar[r] \ar[d]_{(\iota_A)_*}
 & K_n ( {\widetilde{A}} ) \ar[r] \ar[d]^{(\iota_{\widetilde{A}})_*}
 & K_n (\Zz) \ar[r] \ar[d]^{\iota_{(\Zz})_*}
 & 0
\\
0 \ar[r]
 & K_n (M_2 (A)) \ar[r]
 & K_n ( M_2 ({\widetilde{A}} ) ) \ar[r]
 & K_n (M_2 (\Zz) ) \ar[r]
 & 0.
}
\]
Since $A$ and $M_2 (A)$ are $C^*$-algebras,
it follows from \cite{sw}*{Corollary 10.4}
that the rows are exact.
The maps $(\iota_{\widetilde{A}})_*$ and $\iota_{(\Zz})_*$
are isomorphisms because ${\widetilde{A}}$ and $\Zz$ are unital.
So $(\iota_A)_*$ is an isomorphism by the Five Lemma.
\end{proof}

\begin{thm}\label{karc*}
Let $A$ and $\cO$ be $C^*$-algebras,
with $\cO$ properly infinite.
Then the comparison map
$K_n (A \cotimes \cO) \to K_n^{\top} (A \cotimes \cO)$ from algebraic
to topological $K$-theory is an isomorphism for all $n \in \Z$.
\end{thm}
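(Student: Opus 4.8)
The plan is to reduce the statement to the already-known $\cK$-stable case (the Karoubi--Suslin--Wodzicki theorem) by means of Proposition~\ref{basic}. Fix an integer $n$. The comparison map from algebraic to topological $K$-theory is a natural transformation between functors on $\fC^*$, assigning to each $C^*$-algebra $B$ a map $K_n(B) \to K_n^{\top}(B)$ compatible with $*$-homomorphisms. I would package the $A$-tensored versions into two functors $E_{\alg}, E_{\top} \colon \fC^* \to \Ab$, given by $E_{\alg}(B) = K_n(A \cotimes B)$ and $E_{\top}(B) = K_n^{\top}(A \cotimes B)$, and verify that both are $M_2$-stable in the sense of Definition~\ref{M2Stab}.

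First I would check $M_2$-stability. For any $C^*$-algebra $B$ there is a canonical isomorphism $A \cotimes M_2(B) \cong M_2(A \cotimes B)$ (coming from $M_2(B) \cong M_2(\C) \cotimes B$ and associativity of $\cotimes$), and under this identification the map $\id_A \cotimes \iota_B$ becomes the corner inclusion $\iota_{A \cotimes B}$. Hence $E_{\alg}(\iota_B) = K_n(\iota_{A \cotimes B})$, which is an isomorphism by Lemma~\ref{AlgKM2} applied to the $C^*$-algebra $A \cotimes B$; so $E_{\alg}$ is $M_2$-stable. The same computation shows $E_{\top}(\iota_B) = K_n^{\top}(\iota_{A \cotimes B})$, and this is an isomorphism because topological $K$-theory of $C^*$-algebras is $\cK$-stable, hence in particular $M_2$-stable; so $E_{\top}$ is $M_2$-stable as well.

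Now I would apply Proposition~\ref{basic} to each of $E_{\alg}$ and $E_{\top}$, with the given properly infinite $\cO$ and the corner embedding $j \colon \cO \to \cO \cotimes \cK$, concluding that both $E_{\alg}(j)$ and $E_{\top}(j)$ are isomorphisms. Naturality of the comparison map then yields a commutative square whose vertical arrows are these two isomorphisms and whose horizontal arrows are the comparison maps for $A \cotimes \cO$ (top) and for $A \cotimes \cO \cotimes \cK$ (bottom). Since $A \cotimes \cO \cotimes \cK \cong (A \cotimes \cO) \cotimes \cK$, the bottom arrow is the comparison map of the $C^*$-algebra $(A \cotimes \cO) \cotimes \cK$, which is an isomorphism by the Karoubi--Suslin--Wodzicki theorem (\cite{sw}, \cite{karcomp}). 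Three of the four maps in the square being isomorphisms, so is the fourth, namely the comparison map for $A \cotimes \cO$, completing the proof.

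I do not expect a serious obstacle: once Proposition~\ref{basic} and Lemma~\ref{AlgKM2} are in hand, the argument is a diagram chase, which is why the introduction describes it as immediate. The only points requiring care are the naturality of the comparison map (so that the square genuinely commutes) and the compatibility of the identification $A \cotimes M_2(B) \cong M_2(A \cotimes B)$ with the corner inclusions, which is precisely what lets me invoke Lemma~\ref{AlgKM2} rather than reproving $M_2$-stability of $E_{\alg}$ from scratch.
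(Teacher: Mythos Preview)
Your proposal is correct and follows essentially the same route as the paper: both build the commutative square induced by $j \colon \cO \to \cO \cotimes \cK$ and the natural comparison map, use Proposition~\ref{basic} together with Lemma~\ref{AlgKM2} for the algebraic vertical arrow, stability of $K^{\top}_*$ for the topological one, and the Karoubi--Suslin--Wodzicki theorem for the bottom. The only cosmetic difference is that the paper invokes Proposition~\ref{basic} only on the algebraic side (appealing to the well-known $\cK$-stability of $K^{\top}_*$ directly for the right vertical map), while you apply it symmetrically to both $E_{\alg}$ and $E_{\top}$; this changes nothing of substance.
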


\begin{proof}
Apply the functors $K_*$ and $K_*^{\top}$
to the map $j \colon A \cotimes \cO \to A \cotimes \cO \cotimes \cK$.
Since the comparison
map is natural,
we obtain a commutative diagram:
\[
\xymatrix{
K_* (A \cotimes \cO) \ar[d] \ar[r]
 & K_*^{\top} (A \cotimes \cO)\ar[d]
 \\
K_* (A \cotimes \cO \cotimes \cK) \ar[r]
 & K_*^{\top} (A \cotimes \cO \cotimes \cK).}
\]
The right vertical map is well known to be an isomorphism.
By Proposition~\ref{basic},
applied to the functor $E = K_* (A\cotimes {-})$,
and Lemma~\ref{AlgKM2},
the left vertical map is an isomorphism.
By \cite{karcomp}*{Th\'{e}or\`{e}me on page 254}
and \cite{sw}*{Theorem 10.9},
the same is
true of the horizontal map at the bottom.
It follows that the top horizontal map is an isomorphism.
\end{proof}

Our second application is the properly infinite variant
of \cite{ctc}*{Theorem 6.2.1~(iii)}.
It concerns projective tensor products
of locally convex algebras with properly infinite $C^*$-algebras,
and establishes that for such products,
all variants of topological $K$-theory coincide with each other
and with Weibel's homotopy algebraic $K$-theory.
We recall that there are several topological $K$-theory groups
one can associate to a locally convex algebra $L$:
\begin{itemize}
\item
The Bott-periodic Cuntz groups $kk^{\top}_* (\C, L)$.
(See \cite{cubiva}*{Definition 4.2},
and see Remark~\ref{kktop} below on conflicting notation.)
\item
The diffotopy $K$-groups $KD_n (L)$ for $n \in \Z$ (\cite{friendly}*{\S8.2}).
\item
The diffotopy Karoubi-Villamayor style groups $KV^{\dif}_n (L)$
for $n \ge 1$
(\cite{friendly}*{\S8.1}).
\end{itemize}
On the other hand,
$L$, just like any other ring,
has associated several algebraic $K$-groups:
\begin{itemize}
\item
Quillen's $K$-groups $K_n (L)$ for $n \in \Z$
(defined by Bass for $n < 0$).
\item
Weibel's homotopy algebraic $K$-groups $KH_n (L)$
for $n \in \Z$.
(See \cite{kh} and \cite{friendly}*{\S5}.
The ring $\Sigma L$ is defined after \cite{friendly}*{Example 2.3.2}.)
\item
The Karoubi-Villamayor algebraic $K$-groups $KV_n (L)$
for $n \ge 1$.
(In \cite{kv}, use the discrete norm $p (0) = 0$ and $p (a) = 1$
for $a \neq 0.$
The group $KV_n (L)$ is called $K^{-n} (L)$ in~\cite{kv}.
Also see \cite{friendly}*{\S4}.)
\end{itemize}

\begin{rema}\label{kktop}
The group we are calling here $kk^{\top}_* (A, B)$
is called $kk^{\alg}_* (A, B)$ in \cite{cubiva}*{Definition 4.2},
and also in~\cite{cut}.
(Compare the introduction to \cite{cut}*{\S6.1}
with \cite{cubiva}*{\S4}.)
It is called $kk^{\top}_* (A, B)$ in~\cite{ctc}.
It is not the same as the groups $kk_* (A, B)$
in \cite{cubiva}*{Definition 15.4}.
It is also not the same as the algebraic bivariant group
$kk_* (A, B)$ of~\cite{CrTh}.
\end{rema}

\begin{thm}\label{agree}
Let $L$ be a locally convex algebra
and let $\cO$ be a properly infinite $C^*$-algebra;
put $\cM = L \hotimes \cO$.
Then there are natural isomorphisms
\[
KH_n (\cM)
\cong KD_n (\cM)
\cong kk_n^{\top} (\C, \cM)
\]
for $n \in \Z$,
\[
KH_n (\cM)
\cong KV^{\dif}_n (\cM)
\cong KV_n (\cM)
\]
for $n \geq 1$,
and
\[
KH_n (\cM)
\cong K_n (\cM)
\]
for $n \leq 0.$
\end{thm}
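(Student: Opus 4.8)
The plan is to deduce the theorem from the already known case of $L \hotimes \cK$, namely \cite{ctc}*{Theorem 6.2.1}, by applying that result with $L$ replaced by the locally convex algebra $L \hotimes \cO$. Since $L \hotimes \cO$ is again a locally convex algebra, the known theorem identifies all of the listed invariants with one another, in their respective degree ranges, on $\cM \hotimes \cK = (L \hotimes \cO) \hotimes \cK$. The work then is to transport these identifications back along the corner inclusion $k_\cM \colon \cM \to \cM \hotimes \cK$, $x \mapsto x \hotimes e_{0,0}$. Throughout, every comparison will be effected by the natural transformations of \cite{ctc} between the algebraic invariants $KH_*$, $KV_*$, $K_*$ and the topological ones $KD_*$, $kk^{\top}_*(\C, {-})$, $KV^{\dif}_*$, each of the latter being $\cK$-stable by construction.

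First I would record that each algebraic invariant in play $-$ $KH_n$ for all $n$, $KV_n$ for $n \ge 1$, and $K_n$ for $n \le 0$ $-$ is $M_2$-stable on $\LC$. This is the five lemma/unitization argument of Lemma~\ref{AlgKM2}, now carried out in $\LC$ rather than $\fC^*$; the only input that changes is the exactness of the two rows, which must be supplied by the appropriate excision statement for the invariant at hand. Given $M_2$-stability of $E$, the functor $F = E(L \hotimes {-})$ on $\fC^*$ is again $M_2$-stable, because $L \hotimes M_2({-}) \cong M_2(L \hotimes {-})$ naturally; applying Proposition~\ref{basic} to $F$ shows $F(j)$ is an isomorphism, and factoring the spatial corner $j = p \circ k$ through the projective corner as in Corollary~\ref{variant} shows that $E(k_\cM) \colon E(\cM) \to E(\cM \hotimes \cK)$ is a naturally split monomorphism, with retraction $\rho$ assembled from the fixed maps $j$ and $p$. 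For each topological invariant $E^{\top}$ the same corner map induces an isomorphism $E^{\top}(k_\cM)$, directly from $\cK$-stability.

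The key step is the transfer. Fix an algebraic invariant $E$ and a $\cK$-stable topological invariant $E^{\top}$ joined by the natural comparison map $c \colon E \to E^{\top}$, chosen so that \cite{ctc}*{Theorem 6.2.1} (applied to $L \hotimes \cO$) makes $c$ an isomorphism on $\cM \hotimes \cK$. In the commuting naturality square for $k_\cM$, with top edge $c_\cM$, bottom edge $c_{\cM \hotimes \cK}$, left edge $E(k_\cM)$, and right edge $E^{\top}(k_\cM)$, the right and bottom edges are isomorphisms, while the left edge is a split monomorphism whose retraction $\rho$ is compatible with $c$ (since $\rho$ is built from maps to which $c$ is natural, one has $c_\cM \circ \rho = \rho^{\top} \circ c_{\cM \hotimes \cK}$, where $\rho^{\top} = E^{\top}(k_\cM)^{-1}$). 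A short diagram chase then shows $\ker \rho = 0$: if $\rho(z) = 0$ then $\rho^{\top}(c_{\cM \hotimes \cK}(z)) = c_\cM(\rho(z)) = 0$, so $c_{\cM \hotimes \cK}(z) = 0$ as $\rho^{\top}$ is invertible, whence $z = 0$ by injectivity of $c_{\cM \hotimes \cK}$. Being a split monomorphism with $\ker \rho = 0$, $E(k_\cM)$ is an isomorphism, and then the square forces $c_\cM \colon E(\cM) \to E^{\top}(\cM)$ to be an isomorphism as well.

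Finally I would assemble the statement by running the transfer with $(E, E^{\top}) = (KH, KD)$ and $(KH, kk^{\top}(\C, {-}))$ to obtain the first chain of isomorphisms for all $n$; with $(KV, KV^{\dif})$ for $n \ge 1$, combining with the first chain through $KH$, to obtain the second; and with $(K, KD)$ in degrees $n \le 0$ to obtain the last. Naturality of every map used makes all the resulting isomorphisms natural. I expect the main obstacle to be the very first step, establishing $M_2$-stability $-$ equivalently, enough excision $-$ for the algebraic invariants over $\LC$ rather than over $\fC^*$, which is precisely the delicate point flagged in \cite{friendly}*{Remark 2.1.13}. Once that is secured, the properly infinite hypothesis enters only through Proposition~\ref{basic} and Corollary~\ref{variant}, and the remainder is the formal diagram chase above.
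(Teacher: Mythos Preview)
Your overall strategy---reduce to the known $\cK$-stable case \cite{ctc}*{Theorem 6.2.1} applied to $L\hotimes\cO$, then transport along the corner inclusion via Corollary~\ref{variant} and a retract-of-isomorphism argument---is exactly what the paper does for the comparisons among $KH_*$, $KD_*$, $KV_*$, $KV^{\dif}_*$, and $K_{\le 0}$. Two points deserve comment.

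First, a minor one: your claim that $KD_*$, $kk^{\top}_*(\C,{-})$, $KV^{\dif}_*$ are ``$\cK$-stable by construction'' (for the projective tensor product with the $C^*$-algebra~$\cK$) is not justified; what is built in is $\cK_\infty$-stability. Your diagram chase uses this through the invertibility of $\rho^{\top}=E^{\top}(k_\cM)^{-1}$. The paper avoids the issue by treating all functors symmetrically: since every functor in the list is $M_2$-stable, Corollary~\ref{variant} gives a naturally split monomorphism on \emph{both} sides of the naturality square, and then ``a retract of an isomorphism is an isomorphism'' yields $c_\cM$ an isomorphism directly, without ever needing $E^{\top}(k_\cM)$ to be invertible.

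Second, and more substantively, the comparison with $kk^{\top}_*(\C,{-})$ is where your plan diverges from the paper and has a gap. There is no natural transformation between $KH_*$ and $kk^{\top}_*(\C,{-})$ defined on all of $\LC$ that you can simply retract: \cite{ctc}*{Theorem 6.2.1~(iii)} as invoked in the paper covers only the maps among $KH$, $KD$, $KV$, $KV^{\dif}$, while the link to $kk^{\top}$ comes only in degree~$0$ from the separate source \cite{cut}*{Theorem 6.2.1}. The paper therefore proceeds differently for this functor: it first uses the retract argument to obtain $kk^{\top}_0(\C,\cM)\cong K_0(\cM)$; then it shows that $KH_*({-}\hotimes\cO)$ is diffotopy invariant (from the already-proved $KH\cong KD$) and $\cK_\infty$-stable (via the diffotopy equivalence $\cK\hotimes M_2(\C)\simeq\cK\hotimes\cK_\infty$ combined with another application of Corollary~\ref{variant}); and finally, since both $KH_*({-}\hotimes\cO)$ and $kk^{\top}_*(\C,{-}\hotimes\cO)$ now share excision, diffotopy invariance, and $\cK_\infty$-stability, it uses the cone sequence $0\to S_\infty A\to C_\infty A\to A\to 0$ and the smooth Toeplitz sequence $0\to\cK_\infty\hotimes A\to{\mathcal T}_0\hotimes A\to S_0 A\to 0$ to propagate the degree-$0$ isomorphism inductively to all positive and all negative degrees. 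Your proposal skips this entire step; without a globally defined comparison map there is nothing to retract, so the $kk^{\top}$ portion of the theorem is not yet proved.
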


\begin{proof}
All the functors appearing in the theorem are $M_2$-stable.
Moreover,
there are natural transformations as follows:
$KV_n \to KH_n$ for $n \ge 1$
(coming from (44) on page 127 of~\cite{friendly}),
$KV^{\dif}_n \to KD_n$ for $n \ge 1$
(coming from the formula before \cite{friendly}*{Proposition 8.2.1}),
and $K_n \to KH_n$ for $n \in \Z$
(see the beginning of \cite{kh}*{\S1};
that paper, in contrast to our convention,
takes $KV_n$ to be defined for all $n \in \Z$).
For any locally convex algebra~$L$,
we have algebras $\Omega L$ as in~(31) on page 127 of~\cite{friendly}
and $\Omega^{\dif} L$ as in~(64) on page 145 of~\cite{friendly},
and
there is an obvious map $\Omega L \to \Omega^{\dif} (L)$.
This map induces natural transformations
$KV_n \to KV^{\dif}_n$ for $n \geq 1$
and (by taking direct limits)
$KH_n \to KD_n$ for $n \in \Z$.

By \cite{ctc}*{Theorem 6.2.1~(iii)},
except for the map $K_n \to KH_n$,
all these natural maps,
when applied to $\cM \hotimes \cK$,
become isomorphisms for all
those $n$ for which they are defined.
By \cite{ctc}*{Lemma 3.2.1~(ii)},
\cite{ctc}*{Theorem 6.2.1~(ii)},
and \cite{kh}*{Proposition 1.5~(i)},
the map $K_n (\cM \hotimes \cK) \to KH_n (\cM \hotimes \cK)$
is an isomorphism for $n \leq 0$.
(Our theorem says nothing about this map for $n \geq 1$.)
Next use Corollary~\ref{variant}
(including naturality of the splitting),
and the fact that a retract of an isomorphism is an isomorphism,
to show that all these natural maps
(again, except $K_n \to KH_n$ for $n \geq 0$)
are also isomorphisms
when evaluated at $\cM$.
We have verified all the isomorphisms in the statement of the theorem
not involving $kk_n^{\top} (\C, \cM)$.

{}From \cite{cut}*{Theorem 6.2.1 and Definition 2.3.2}
(see Remark~\ref{kktop} for the discrepancy in notation),
we get a natural isomorphism
$kk^{\top}_0 (\C, \cM \hotimes \cK) \cong K_0 (\cM \hotimes \cK)$.
Appealing to Corollary~\ref{variant} in the same way as above,
we then get a
natural isomorphism
\[
kk^{\top}_0 (\C, \cM) \cong K_0 (\cM).
\]

The natural isomorphism
$KH_n (\cM) \cong KD_n (\cM)$ for $n \in \Z$,
which we already proved,
implies that the functor
$KH_* ({-} \hotimes \cO)$ is diffotopy invariant.
This functor is $M_2$-stable because $KH_*$ is.
It is shown in the proof of \cite{cut}*{Proposition 6.1.2}
that the inclusion $M_2 (\C) \to \cK_{\infty}$
into the smooth compact operators
induces a diffotopy equivalence
$\cK \hotimes M_2 (\C) \to \cK \hotimes \cK_{\infty}$.
Therefore
\[
KH_* ({-} \hotimes \cO \hotimes \cK) 
 \to KH_* ({-} \hotimes \cO \hotimes \cK \hotimes \cK_{\infty})
\]
is an isomorphism.
Applying Corollary~\ref{variant} in the same way as before,
we conclude that $KH_* ({-} \hotimes \cO)$ is $\cK_{\infty}$-stable.
Summing up,
$KH_* ({-} \hotimes \cO)$ satisfies excision
(\cite{kh}*{Theorem~2.1})
and is diffotopy invariant and
$\cK_{\infty}$-stable.

The functor $kk^{\top}_* (\C, {-} \hotimes \cO)$
also has these properties.
For a manifold~$\M,$
possibly with boundary,
and a locally convex algebra~$A,$
let $C^{\infty} (\M, A)$ be the algebra of all infinitely often
differentiable $A$-valued functions on~$\M,$
with the topology of uniform convergence of all derivatives
in all seminorms on~$A$.
Define locally convex algebras
\[
C_{\infty} A
 = \big\{ f \in C^{\infty} ([0, 1], \, A) \colon
    {\mbox{$f^{(n)} (0) = 0$ for $n = 1, 2, \ldots$
    and $f^{(n)} (1) = 0$ for $n = 0, 1, 2, \ldots$}} \big\},
\]
\[
S_{\infty} A
 = \big\{ f \in C_{\infty} A \colon f (0) = 0 \big\},
\quad {\mbox{and}} \quad
S_0 A = \big\{ f \in C^{\infty} (S^1, A) \colon f (1) = 0 \big\}.
\]

Now consider the exact sequence
\[
0 \longrightarrow S_{\infty} A
  \longrightarrow C_{\infty} A
  \longrightarrow A
  \longrightarrow 0.
\]
(This is the same sequence as in \cite{cudocu}*{\S2.1}.)
The algebra $C_{\infty} A$ is diffotopy equivalent to~$0,$
so
$KH_* (C_{\infty} A) = kk^{\top}_* (\C, A) = 0.$
Using excision, we thus get for every $n \in \Z$
a commutative diagram with exact rows:
\[
\xymatrix{
0 \ar[r]
 & KH_{n + 1} (L \hotimes \cO) \ar[r] \ar[d]
 & KH_n ( S_{\infty} (L \hotimes \cO) ) \ar[r] \ar[d]
 & 0
\\
0 \ar[r]
 & kk^{\top}_{n + 1} (\C, \, L \hotimes \cO) \ar[r]
 & kk^{\top}_n (\C, \, S_{\infty} (L \hotimes \cO) ) \ar[r]
 & 0.
}
\]
Since
$KH_{0} (M \hotimes \cO)) \to kk^{\top}_{0} (\C, \, M \hotimes \cO)$
is an isomorphism for all locally convex algebras~$M,$
and since
$S_{\infty} (M \hotimes \cO) \cong S_{\infty} (M) \hotimes \cO,$
an induction argument shows that the map
$KH_{n} (L \hotimes \cO)) \to kk^{\top}_{n} (\C, \, L \hotimes \cO)$
is an isomorphism for all $n \geq 0.$

Forming the projective tensor product of a locally convex algebra~$A$
with the second extension in \cite{cudocu}*{\S2.3},
we further get the exact sequence
\[
0 \longrightarrow \cK_{\infty} \hotimes A
  \longrightarrow {\mathcal{T}}_0 \hotimes A
  \longrightarrow S_0 A
  \longrightarrow 0.
\]
By \cite{cudocu}*{Theorem~6.4},
we have
$KH_* ({\mathcal{T}}_0 \hotimes A)
 = kk^{\top}_{*} (\C, \, {\mathcal{T}}_0 \hotimes A) = 0$
for every locally multiplicatively convex algebra~$A$,
that is, every locally convex algebra whose topology
is given by submultiplicative seminorms.
We claim that the same argument works even without
local multiplicative convexity.
Indeed,
since the theories are defined for locally convex algebras,
the proof only needs to be modified at the end by
tensoring with the identity map on a general locally convex algebra,
and checking that allowing general locally convex algebras
does not affect the proof of \cite{cudocu}*{Lemma~6.3}.
(It is shown in \cite{cubiva}*{Proposition~8.2}
that $kk^{\top}_{*} (\C, \, {\mathcal{T}}_0 \hotimes A) = 0$
without local multiplicative convexity,
but the analog of \cite{cudocu}*{Theorem~6.4}
is not present in \cite{cubiva}.)
It follows that we get a commutative diagram with exact rows:
\[
\xymatrix{
0 \ar[r]
 & KH_{n + 1} (S_0 ( L \hotimes \cO)) \ar[r] \ar[d]
 & KH_n ( \cK_{\infty} \hotimes L \hotimes \cO) \ar[r] \ar[d]
 & 0
\\
0 \ar[r]
 & kk^{\top}_{n + 1} (\C, \, S_0 ( L \hotimes \cO) ) \ar[r]
 & kk^{\top}_n (\C, \, \cK_{\infty} \hotimes L \hotimes \cO) \ar[r]
 & 0.
}
\]
An argument similar to the one above,
now also using $\cK_{\infty}$-stability,
shows that
$KH_{n} (L \hotimes \cO) \to kk^{\top}_{n} (\C, \, L \hotimes \cO)$
is an isomorphism for all $n \leq 0.$
\end{proof}

In view of the theorem above,
the topological $K$-groups of a locally convex algebra
of the form $L \hotimes \cO$ are
unambiguously defined.
To unify notation, we write
\[
K^{\top}_* (L \hotimes \cO)
  = kk_*^{\top} (\C, L \hotimes \cO)
  = KD_* (L \hotimes \cO)
  = KH_* (L \hotimes \cO).
\]
Note in particular that these topological $K$-groups are Bott periodic,
by \cite{cubiva}*{Theorem~8.3}.

A \emph{Fr\'{e}chet algebra} is a locally convex algebra
which is metrizable;
we call it \emph{locally multiplicatively convex}
if its topology is determined
by a sequence $(\rho_n)_{n \in \Z_{> 0}}$
of submultiplicative seminorms.
A {\emph{uniformly bounded left approximate identity}}
in $L$ is a net $(e_{\lambda})_{\lambda \in \Lambda}$
of elements of $L$
such that $e_{\lambda} a \to a$
for all $a \in L$
and such that
$\sup_{n \in \Z_{> 0}} \sup_{\lambda \in \Lambda} \rho_n (e_{\lambda})
  < \infty$.
A {\emph{uniformly bounded right approximate identity}}
is defined similarly.

\begin{thm}\label{karlc}
Let $L$ be a locally multiplicatively convex Fr\'{e}chet
algebra with uniformly bounded left or
right approximate identity.
Then there is a natural isomorphism
$K_* (L \hotimes \cO) \cong K_*^{\top} (L \hotimes \cO)$.
\end{thm}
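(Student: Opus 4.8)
The plan is to run the argument of Theorem~\ref{karc*} in the locally convex setting, with Corollary~\ref{variant} playing the role Proposition~\ref{basic} played there, and to reduce everything to the already known statement for $L \hotimes \cK$, namely \cite{ctc}*{Theorem 8.3.3}. Write $\cM = L \hotimes \cO$ and let $k \colon \cM \to \cM \hotimes \cK$ be the stabilization $m \mapsto m \hotimes e_{0, 0}$. Since the comparison map $K_* \to K_*^{\top}$ is a natural transformation, applying it to $k$ gives a commutative square
\[
\xymatrix{
K_* (\cM) \ar[r] \ar[d]_{K_*(k)} & K_*^{\top} (\cM) \ar[d]^{K_*^{\top}(k)} \\
K_* (\cM \hotimes \cK) \ar[r] & K_*^{\top} (\cM \hotimes \cK).
}
\]
I will show the two vertical maps are naturally split monomorphisms with compatible retractions and that the bottom map is an isomorphism; then, since a compatible retract of an isomorphism is an isomorphism (exactly as exploited repeatedly in the proof of Theorem~\ref{agree}), the top map is an isomorphism.

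First I would check that both $K_*(L \hotimes {-})$ and $K_*^{\top}(L \hotimes {-})$ are $M_2$-stable on $\fC^*$, so that Corollary~\ref{variant} applies to each and shows $K_*(k)$ and $K_*^{\top}(k)$ are naturally split monomorphisms. For $K_*^{\top}$ this is immediate, since topological $K$-theory is $M_2$-stable and $L \hotimes M_2(A) = M_2(L \hotimes A)$. For algebraic $K$-theory I would rerun the proof of Lemma~\ref{AlgKM2} with $L \hotimes A$ in place of $A$: unitize, use the split exact sequence relating $L \hotimes A$, its unitization, and the ground ring, apply $K_n$, and finish with the Five Lemma. The point that must be supplied is exactness of the resulting rows, i.e.\ excision for the algebraic $K$-theory of the algebras $L \hotimes A$; this is where the hypotheses on $L$ enter, since a uniformly bounded one-sided approximate identity makes $L$, and hence $L \hotimes A$, $H$-unital, so the excision results of \cite{sw} (and of \cite{ctc}) apply.

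Next I would verify that $\cM = L \hotimes \cO$ itself satisfies the hypotheses of \cite{ctc}*{Theorem 8.3.3}. As $\cO$ is unital and $L$ is locally multiplicatively convex Fr\'{e}chet with a uniformly bounded one-sided approximate identity $(e_{\lambda})$, the net $(e_{\lambda} \hotimes 1)$ is a uniformly bounded one-sided approximate identity for $\cM$, and $\cM$ is again locally multiplicatively convex Fr\'{e}chet. Hence \cite{ctc}*{Theorem 8.3.3} applies to $\cM$ and makes the bottom horizontal map an isomorphism. To assemble the pieces I would use that the retraction furnished by Corollary~\ref{variant} is $E(j)^{-1} E(p)$, where $p \colon \cM \hotimes \cK \to L \hotimes (\cO \cotimes \cK)$ is the canonical map and $j = p k$; naturality of the comparison map with respect to $p$ and $j$, together with the isomorphisms $E(j)$ of Proposition~\ref{basic} for $E = K_*(L \hotimes {-})$ and $E = K_*^{\top}(L \hotimes {-})$, shows the two retractions $r, r'$ intertwine the two comparison maps. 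Writing $g$ for the invertible bottom map, $i, i'$ for the split monomorphisms, and $f$ for the top map, one has $i' f = g i$ and $f r = r' g$, so $f$ has two-sided inverse $r g^{-1} i'$: indeed $(r g^{-1} i') f = r g^{-1} (i' f) = r g^{-1} (g i) = r i = \id$ and $f (r g^{-1} i') = (f r) g^{-1} i' = (r' g) g^{-1} i' = r' i' = \id$. Naturality of the whole construction yields naturality of the resulting isomorphism.

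The hard part will be the $M_2$-stability of algebraic $K$-theory for the algebras $L \hotimes A$, that is, supplying excision in this setting; everything else is either formal diagram chasing or directly quoted from \cite{ctc}*{Theorem 8.3.3}, Proposition~\ref{basic}, and Corollary~\ref{variant}. In particular, I expect the one delicate point within this step to be confirming that a one-sided (rather than two-sided) uniformly bounded approximate identity already suffices for the relevant $H$-unitality and excision statements, exactly as it does in \cite{ctc}*{Theorem 8.3.3}.
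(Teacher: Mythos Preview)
Your proposal is correct and follows essentially the same route as the paper: establish $M_2$-stability of $K_*$ on the relevant class of algebras via excision (the paper cites \cite{friendly}*{Theorem 6.6.6} rather than invoking $H$-unitality and \cite{sw} directly, but the content is the same), apply Corollary~\ref{variant} to get naturally split monomorphisms, invoke the known $\cK$-stable case (\cite{friendly}*{Theorem 12.1.1}, equivalently \cite{ctc}*{Theorem 8.3.3}) for the bottom map, and conclude by the retract-of-an-isomorphism argument. Your explicit two-sided inverse $r g^{-1} i'$ just unpacks what the paper means by ``naturality of the splitting'' together with ``a retract of an isomorphism is an isomorphism.''
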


\begin{proof}
By \cite{friendly}*{Theorem 6.6.6},
$K$-theory satisfies excision
in the category $\fC$
of locally multiplicatively convex Fr\'{e}chet algebras
with uniformly bounded left approximate identities.
It follows from the same reasoning as in the proof of Lemma~\ref{AlgKM2}
that $K$-theory is $M_2$-stable on $\fC$.
Moreover $\fC$ is closed under $\hotimes$,
and any $C^*$-algebra is in $\fC$.
By \cite{friendly}*{Theorem 12.1.1},
the map
$K_* (L \hotimes \cO \hotimes \cK)
 \to K_*^{\top} (L \hotimes \cO \hotimes \cK)$
is an isomorphism.
The proof is completed by using Corollary~\ref{variant}
(including naturality of the splitting),
and the fact that a retract of an isomorphism is an isomorphism.
\end{proof}

\begin{rema}
For $L$ as in Theorem \ref{karlc},
$K_* (L \hotimes \cO)$ agrees also with
the topological $K$-theory
of locally multiplicatively convex Fr\'{e}chet algebras
defined in \cite{frechet}.
To see this,
it suffices,
by the argument of \cite{friendly}*{Remark 12.1.4},
to check that
$K_0 (L \hotimes \cO)\cong K_0 (L \hotimes \cO \hotimes \cK_{\infty})$.
This follows from Theorem \ref{agree},
using $\cK_{\infty}$-stability of $kk^{\top}$.
\end{rema}

In Theorem~\ref{seq} we consider algebraic cyclic homology
of $\Q$-algebras,
as in \cite{friendly}*{\S11.1}.
If $A$ is an algebra,
we write
$HC_* (A) = H_* (C^{\lambda} (A / \Q))$
for the homology of Connes' complex
defined using the algebraic tensor product,
taken over $\Q$:
\[
C_n^{\lambda} (A / \Q) = (A^{\otimes_{\Q} (n + 1)})_{\Z / (n + 1) \Z}.
\]

We will need some properties of infinitesimal $K$-theory $K^{\inf}_*$
(as defined, for example, in \cite{friendly}*{\S11.2}).

\begin{thm}[\cite{kabi}*{Remark 4.2},
 \cite{friendly}*{Theorem 11.2.1~(ii)}]\label{T_4129_Kinf}
On $\Q$-algebras,
the functor $K^{\inf}_*$ satisfies excision and is $M_2$-stable.
\end{thm}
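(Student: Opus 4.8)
The plan is to exhibit $K^{\inf}$ as a homotopy fiber and to transport both properties from its two constituents. Recall that on $\Q$-algebras $K^{\inf}$ is defined as the homotopy fiber of the trace map from algebraic $K$-theory to negative cyclic homology $HN$, so that every $\Q$-algebra $A$ sits in a natural fibration sequence $K^{\inf}(A) \to K(A) \to HN(A)$ and hence in a long exact sequence
\[
\cdots \to K^{\inf}_n (A) \to K_n (A) \to HN_n (A) \to K^{\inf}_{n - 1} (A) \to \cdots
\]
functorial in $A$. The point is that the failure of excision of $K$ and of $HN$, and their behaviour under $\iota_A$, are each controlled, and that these match so precisely that they cancel in the fiber.

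Excision is the main point, and the main obstacle. A functor satisfies excision exactly when, for a homomorphism $f \colon A \to B$ carrying an ideal $I \triangleleft A$ isomorphically onto an ideal of $B$, the associated birelative term — the total homotopy fiber of the resulting square of spectra — vanishes. Applying the fibration sequence levelwise to such a square, the birelative $K^{\inf}$-spectrum is the homotopy fiber of the map from the birelative $K$-spectrum to the birelative $HN$-spectrum. Thus $K^{\inf}$ satisfies excision if and only if this comparison of birelative theories is an equivalence, and this is precisely the content of the main result of \cite{kabi}: rationally, the obstruction to excision in $K$-theory coincides with the obstruction to excision in cyclic homology. I would prove it, as there, by first invoking Goodwillie's theorem — which makes the comparison $K \to HN$ an equivalence on relative theories of nilpotent ideals, so that the birelative terms vanish automatically when $I$ is nilpotent — and then reducing the general case to the nilpotent one through a pro-nilpotent (tensor-algebra, Cuntz--Quillen) resolution of $I$, with Wodzicki's excision for $H$-unital algebras controlling the cyclic-homology side. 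This reduction is the genuinely hard step; everything else is formal.

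With excision in hand, $M_2$-stability follows by the argument of Lemma~\ref{AlgKM2}. For a unital $\Q$-algebra, $K (\iota)$ and $HN (\iota)$ are isomorphisms by the classical Morita invariance of algebraic $K$-theory and of cyclic homology, so the five lemma applied to the long exact sequence above shows that $K^{\inf} (\iota)$ is an isomorphism on unital algebras. For a general $\Q$-algebra $A$, unitalize over $\Q$ and form the commutative ladder with split exact rows
\[
0 \to A \to \tilde A \to \Q \to 0, \qquad 0 \to M_2 (A) \to M_2 (\tilde A) \to M_2 (\Q) \to 0,
\]
to which we apply $K^{\inf}_n$. Here excision for $K^{\inf}$ plays exactly the role that Suslin--Wodzicki played in Lemma~\ref{AlgKM2}: it identifies $K^{\inf}_n (M_2 (A))$ with the relative term of the ideal $M_2 (A) \triangleleft M_2 (\tilde A)$ and thereby makes the rows exact. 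The maps induced by $\iota_{\tilde A}$ and $\iota_{\Q}$ are isomorphisms by the unital case, and the five lemma then shows that $\iota_A$ induces an isomorphism, which is $M_2$-stability in the sense of Definition~\ref{M2Stab}.
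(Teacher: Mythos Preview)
Your proof is correct and follows essentially the same route as the paper: both derive excision for $K^{\inf}$ from the fibration $K^{\inf} \to K \to HN$ together with the main theorem of \cite{kabi} identifying the birelative $K$- and $HN$-terms, and both then deduce $M_2$-stability from excision via the unitalization argument of Lemma~\ref{AlgKM2}. You supply more detail than the paper does---in particular, your sketch of how \cite{kabi} is proved (Goodwillie plus pro-nilpotent reduction) and your explicit verification of the unital case of $M_2$-stability via Morita invariance of $K$ and $HN$---but the argument is the same, and your choice to unitalize over $\Q$ rather than $\Z$ is the correct adaptation for $\Q$-algebras.
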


\begin{proof}
Once one has excision,
$M_2$-stability follows from the same argument as in
the proof of Lemma~\ref{AlgKM2}.
As observed in \cite{kabi}*{Remark 4.2} and cited in \cite{friendly}*{Theorem 11.2.1~(ii)}, excision
follows from \cite{kabi}*{Theorem 0.1};
since the proof is not made explicit in the above references,
we give a short argument below.

We will need birelative groups for $K_*$, $K^{\inf}_*$,
and $HN_*$
as in~\cite{kabi}.
These are defined for rings $A$ and~$B$,
an ideal $I$ in $A$,
and a homomorphism $f \colon A \to B$ such that
$f |_I$ is injective and $f (I)$ is an ideal in~$B$.
For $K_*$, see \cite{GW}*{\S0.1};
in the other two cases,
the definitions are analogous.
By construction,
there is a long exact sequence
\[
K_{n + 1} (B : I)
  \longrightarrow K_n (A, B : I)
  \longrightarrow K_n (A : I)
  \longrightarrow K_n (B : I)
  \longrightarrow K_{n - 1} (A, B : I),
\]
and similarly for $K^{\inf}_*$ and $HN_*$.
(See \cite{friendly}*{\S11.2}.)
Moreover, the fibration of \cite{friendly}*{\S11.2}
involving these gives rise to a long exact sequence
\[
HN_{n + 1} (A, B : I)
  \longrightarrow K^{\inf}_n (A, B : I)
  \longrightarrow K_n (A, B : I)
  \longrightarrow HN_n (A, B : I)
 \longrightarrow K^{\inf}_{n - 1} (A, B : I).
\]
When $A$ and $B$ are $\Q$-algebras,
\cite{kabi}*{Theorem 0.1}
states that the map $K_* (A, B : I) \to HN_* (A, B : I)$
is an isomorphism,
so that $K^{\inf}_* (A, B : I) = 0.$
Thus $K^{\inf}_* (A : I) \to K^{\inf}_* (B : I)$
is an isomorphism,
which is excision.
\end{proof}

We will also need the long exact sequence,
for a $\Q$-algebra~$A,$
in the top row of \cite{friendly}*{Diagram (86)}
(before Remark 11.3.4 there):
\begin{equation}\label{weseq}
\xymatrix{
KH_{n + 1} (A) \ar[r]
& K^{\nil}_n (A) \ar[r]
& K_n  (A) \ar[r]
& KH_n (A)  \ar[r]
& K^{\nil}_{n-1} (A). }
\end{equation}

\begin{thm}\label{seq}
Let $L$ be a locally convex algebra
and let $\cO$ be a properly infinite $C^*$-algebra;
put $\cM = L \hotimes \cO$.
For each $n \in \Z$,
there is a natural six term exact sequence of abelian groups as follows:
\[
\xymatrix{
 K^{\top}_{1} (\cM)\ar[r]
  & HC_{2 n - 1} (\cM)\ar[r]
  & K_{2 n} (\cM)\ar[d]
   \\
 K_{2 n - 1} (\cM) \ar[u]
  & HC_{2 n - 2} (\cM) \ar[l]
  & K_0^{\top} (\cM). \ar[l]}
\]
\end{thm}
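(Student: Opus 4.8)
The plan is to deduce the six-term sequence from the localization sequence \eqref{weseq} by replacing its two types of terms with topological $K$-theory and cyclic homology. Written for $\cM$, \eqref{weseq} reads, for every $m\in\Z$, $KH_{m+1}(\cM)\to K^{\nil}_m(\cM)\to K_m(\cM)\to KH_m(\cM)\to K^{\nil}_{m-1}(\cM)$, where $K^{\nil}$ denotes the homotopy fiber of $K\to KH$. By Theorem~\ref{agree} I may replace every $KH_*(\cM)$ by $K^{\top}_*(\cM)$, which is Bott $2$-periodic. Granting a natural isomorphism $K^{\nil}_m(\cM)\cong HC_{m-1}(\cM)$ compatible with the maps of \eqref{weseq}, substituting and setting $m=2n$ folds this long exact sequence into precisely the displayed six-term sequence, the two odd ends $KH_{2n+1}(\cM)$ and $KH_{2n-1}(\cM)$ being identified with $K^{\top}_1(\cM)$ by periodicity. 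So the whole theorem reduces to that one natural isomorphism.

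To produce it I would compare two Chern-character fibrations. The infinitesimal (Goodwillie) fibration $K^{\inf}\to K\to HN$ maps to its homotopy-invariant companion $\mathrm{fib}(KH\to HP)\to KH\to HP$, forming a commuting square whose horizontal fibers are $K^{\inf}$ and $\mathrm{fib}(KH\to HP)$ and whose vertical fibers are $K^{\nil}=\mathrm{fib}(K\to KH)$ and $\mathrm{fib}(HN\to HP)$. The $SBI$ sequence identifies $\mathrm{fib}(HN\to HP)$ with a one-fold shift of cyclic homology, so its homotopy groups are the $HC_{m-1}$; and by the symmetry of the total (iterated) fiber of the square, the relative Chern character $K^{\nil}(\cM)\to\mathrm{fib}(HN\to HP)(\cM)$ is an equivalence if and only if the induced map $a\colon K^{\inf}(\cM)\to\mathrm{fib}(KH\to HP)(\cM)$ is. The point of routing through $a$ is that, whereas algebraic $K$-theory (and hence $K^{\nil}$) is not known to be $M_2$-stable on general locally convex algebras, the map $a$ runs between functors that are: $K^{\inf}_*$ is $M_2$-stable by Theorem~\ref{T_4129_Kinf}, and $KH_*$, $HP_*$ and therefore $\mathrm{fib}(KH\to HP)_*$ are $M_2$-stable by Morita invariance and the five lemma.

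It therefore suffices to show $a$ is an equivalence on $\cM$, and here I would transfer from the $\cK$-stabilized algebra exactly as elsewhere in the paper. Applying Corollary~\ref{variant} to the functors $A\mapsto K^{\inf}_*(L\hotimes A)$ and $A\mapsto\mathrm{fib}(KH\to HP)_*(L\hotimes A)$, the inclusion $\cM\to\cM\hotimes\cK$ induces naturally split monomorphisms on the source and target of $a$, compatibly with $a$ itself; hence $a_{\cM}$ is a retract of $a_{\cM\hotimes\cK}$. For the algebra $\cM\hotimes\cK=(L\hotimes\cO)\hotimes\cK$ the six-term sequence is \cite{ctc}*{Theorem 6.3.1} applied with $L$ replaced by $L\hotimes\cO$; through \eqref{weseq} and periodicity this is equivalent to $a_{\cM\hotimes\cK}$ being an equivalence. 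Since a retract of an equivalence is an equivalence, $a_{\cM}$ is one too, giving the required $K^{\nil}_m(\cM)\cong HC_{m-1}(\cM)$.

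The main obstacle I expect is the second step: formulating the square of Chern-character fibrations precisely enough that the relative character is genuinely carried by the $M_2$-stable functor $K^{\inf}$, and in particular reconciling the polynomial homotopy invariance underlying $KH$ and $HP$ with the smooth diffotopy invariance that Corollaries~\ref{hig_proper} and~\ref{lc_hig} make available for functors such as $K^{\inf}({-}\hotimes\cO)$. This reconciliation is exactly what Theorem~\ref{agree}, the coincidence of $KH$, $KD$ and $kk^{\top}$ on algebras of the form $L\hotimes\cO$, is there to supply; once it is in hand, the transfer along $\cM\to\cM\hotimes\cK$ and the final bookkeeping with Bott periodicity are routine.
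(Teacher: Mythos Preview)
Your argument is correct and reaches the same destination as the paper---reducing everything to the isomorphism $K^{\nil}_m(\cM)\cong HC_{m-1}(\cM)$, then invoking Theorem~\ref{agree} and Bott periodicity---but you establish that isomorphism by a genuinely different route.

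The paper proves it by showing directly that $\cM$ is $K^{\inf}$-regular: since $K^{\inf}_*$ is excisive and $M_2$-stable (Theorem~\ref{T_4129_Kinf}), Corollary~\ref{lc_hig} makes $K^{\inf}_*({-}\hotimes\cO)$ diffotopy invariant, and then Rosenberg's argument from \cite{comparos}*{Theorem~20} (with smooth homotopies) upgrades diffotopy invariance to polynomial homotopy invariance, i.e.\ $K^{\inf}$-regularity. One then quotes \cite{ctc}*{Proposition~3.1.4} to get that $\nu$ is an isomorphism. Your approach instead rephrases the problem as showing that the comparison map $a\colon K^{\inf}(\cM)\to\mathrm{fib}(KH\to HP)(\cM)$ is an equivalence, observes that both sides are $M_2$-stable (so that Corollary~\ref{variant} applies and $a_{\cM}$ is a natural retract of $a_{\cM\hotimes\cK}$), and then imports the $\cK$-stable case from~\cite{ctc}. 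The reformulation via $a$ is exactly the right move, since $K^{\nil}$ and $HC$ themselves are not known to be $M_2$-stable on arbitrary nonunital $\Q$-algebras; this is the substantive idea in your proof. The paper's route is a bit more self-contained (it reproves $K^{\inf}$-regularity rather than transferring it), while yours black-boxes the locally convex result from~\cite{ctc} more efficiently. One small cleanup: rather than saying the six-term sequence of \cite{ctc}*{Theorem~6.3.1} is ``equivalent'' to $a_{\cM\hotimes\cK}$ being an equivalence, cite the $K^{\inf}$-regularity of $L\hotimes\cK$ that underlies its proof (this is what \cite{ctc}*{Proposition~3.1.4} packages); and your closing worry about reconciling polynomial and smooth homotopy is unnecessary, since the square $K\to HN$, $KH\to HP$ and the identification of the fibers are purely algebraic facts valid for any $\Q$-algebra.
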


\begin{proof}
Let $\nu_n \colon K^{\nil}_n (\cM) \to HC_{n-1} (\cM)$
be the a natural map in the second column
of \cite{friendly}*{Diagram (86)}
(before Remark 11.3.4 there).
Given the exact sequence~(\ref{weseq}) with $A = \cM$,
if we prove that $\nu_*$ is an isomorphism,
we obtain an exact sequence which,
when combined with Theorem~\ref{agree}
and Bott periodicity,
gives that of the theorem.
To prove that $\nu_n \colon K^{\nil}_n \to HC_{n - 1} (\cM)$
is an isomorphism,
it suffices,
in view of
\cite{ctc}*{Proposition 3.1.4},
to show that $\cM$ is $K^{\inf}$-regular.
This means that for every $p \ge 1$ and $n \in \Z$,
and with $\cM [t_1, t_2, \ldots, t_p]$ being the polynomial
ring in $p$~variables over~$\cM$,
the map
$\cM \to \cM [t_1, t_2, \ldots, t_p]$ induces an isomorphism
in $K^{\inf}_*$.
Theorem~\ref{T_4129_Kinf} implies that
$K^{\inf}_*$ satisfies excision and is $M_2$-stable.
By Corollary~\ref{lc_hig},
it follows that $K^{\inf}_* ({-} \hotimes \cO)$ is diffotopy invariant.
The argument of the proof of \cite{comparos}*{Theorem~20},
using $C^{\infty}$~functions on $[0, 1]$
instead of continuous functions,
now shows that $\cM$ is $K^{\inf}$-regular.
\end{proof}

\begin{thm}\label{khseq}
Let $A$ be a $\C$-algebra
and let $\cO$ be a properly infinite $C^*$-algebra.
Then there is a natural isomorphism
\[
KH_n (A \otimes_{\C} \cO)
\cong
\begin{cases}
 K_0 (A \otimes_{\C} \cO) & {\mbox{$n$ even}}
 \\
 K_{-1} (A \otimes_{\C} \cO) & {\mbox{$n$ odd}},
\end{cases}
\]
and there is an exact sequence
\begin{equation*}
\xymatrix{
 K_{-1} (A \otimes_{\C} \cO)\ar[r]
 & HC_{2 n - 1} (A \otimes_{\C} \cO)\ar[r]
 & K_{2 n} (A \otimes_{\C} \cO)\ar[d]
  \\
K_{2 n - 1} (A \otimes_{\C} \cO) \ar[u]
 & HC_{2 n - 2} (A \otimes_{\C} \cO) \ar[l]
 & K_{0} (A \otimes_{\C} \cO). \ar[l]}
\end{equation*}
\end{thm}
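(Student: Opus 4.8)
The plan is to set $B = A \otimes_{\C} \cO$ and to transfer the known statement for $B \otimes_{\C} \cK = (A \otimes_{\C} \cO) \otimes_{\C} \cK$, which is an instance of \cite{ctc}*{Theorem 7.1.1}, back to $B$ itself. The only structural tool I need is a purely algebraic analogue of Corollary~\ref{variant}: if $F$ is any $M_2$-stable functor on $\C$-algebras, then the corner inclusion $B \to B \otimes_{\C} \cK$, $x \mapsto x \otimes e_{0,0}$, induces a naturally split monomorphism on $F$. This follows from Proposition~\ref{basic}: applying it to the $M_2$-stable functor $D \mapsto F(A \otimes_{\C} D)$ on $\fC^*$ shows that $F$ carries the map $A \otimes_{\C} \cO \to A \otimes_{\C} (\cO \cotimes \cK)$ to an isomorphism, and since this map factors as $B \to B \otimes_{\C} \cK \to A \otimes_{\C} (\cO \cotimes \cK)$ through the algebraic corner, the first arrow must be split injective. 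The same holds with $A$ replaced by any polynomial algebra $A[t_1, \ldots, t_p]$.

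First I would dispose of the cyclic-homology engine. The functors $K^{\inf}_*$, $KH_*$ and $HC_*$ are all $M_2$-stable on $\C$-algebras (Theorem~\ref{T_4129_Kinf} for $K^{\inf}_*$, Morita invariance for the other two). Using the splitting above for $F = K^{\inf}_*$ --- applied both to $B$ and to its polynomial extensions --- together with the fact that a retract of an isomorphism is an isomorphism, I would deduce $K^{\inf}$-regularity of $B$ from that of $B \otimes_{\C} \cK$, which is part of \cite{ctc}*{Theorem 7.1.1}. By \cite{ctc}*{Proposition 3.1.4} this makes $\nu_n \colon K^{\nil}_n(B) \to HC_{n-1}(B)$ an isomorphism, exactly as in the proof of Theorem~\ref{seq}.

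Next I would compute $KH_*(B)$. Feeding $K^{\nil}_n(B) \cong HC_{n-1}(B)$ into the exact sequence~\eqref{weseq} gives, for every $n$, exactness of $KH_{n+1}(B) \to HC_{n-1}(B) \to K_n(B) \to KH_n(B) \to HC_{n-2}(B)$. Since $HC_m = 0$ for $m < 0$, this already yields $K_n(B) \cong KH_n(B)$ for all $n \leq 0$; in particular $KH_0(B) \cong K_0(B)$ and $KH_{-1}(B) \cong K_{-1}(B)$, with no appeal to $M_2$-stability of $K_0$ or $K_{-1}$ (which fails in general). It then remains to transfer Bott $2$-periodicity of $KH_*$ from $B \otimes_{\C} \cK$ to $B$: applying the splitting above to the $M_2$-stable functors $KH_n$ and $KH_{n+2}$ and to the natural periodicity map between them, the retract-of-an-isomorphism argument promotes the periodicity isomorphism for $B \otimes_{\C} \cK$ (again from \cite{ctc}*{Theorem 7.1.1}) to one for $B$. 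Combining periodicity with $KH_0 \cong K_0$ and $KH_{-1} \cong K_{-1}$ gives the first displayed isomorphism of the theorem.

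Finally, the six-term sequence is a formal consequence: substituting $K^{\nil}_n(B) \cong HC_{n-1}(B)$ and $KH_{2n}(B) \cong K_0(B)$, $KH_{2n-1}(B) \cong K_{-1}(B)$ into~\eqref{weseq} rolls the long exact sequence up into the stated six-term cycle, precisely as $K^{\top}$ was used in Theorem~\ref{seq}, and naturality in $A$ is inherited at each stage from the naturality of the splitting constructed above and of the comparison maps. I expect the main obstacle to be the transfer of $2$-periodicity: unlike $K^{\inf}$-regularity it is not visible from the $HC$-engine, so it must be imported from \cite{ctc}*{Theorem 7.1.1}, and some care is needed to phrase the periodicity map as a genuine natural transformation of $M_2$-stable functors so that the retract argument applies --- the non-stability of $K_0$ and $K_{-1}$ is exactly what prevents a more direct, term-by-term transfer of the whole conclusion.
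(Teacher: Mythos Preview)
Your outline matches the paper's: establish $K^{\inf}$-regularity of $B = A \otimes_{\C} \cO$ by retracting off $B \otimes_{\C} \cK$, deduce $K^{\nil}_n(B) \cong HC_{n-1}(B)$ via \cite{ctc}*{Proposition 3.1.4}, and then combine Bott periodicity of $KH_*(B)$ with~\eqref{weseq}. Your shortcut to $K_n(B) \cong KH_n(B)$ for $n \le 0$ directly from $HC_{<0} = 0$ is fine and slightly more direct than the paper's route through \cite{ctc}*{Lemma 3.2.1(ii)} and \cite{kh}*{Proposition 1.5(i)}. (One aside: $K_0$ and $K_{-1}$ \emph{are} $M_2$-stable on all rings; the subtlety Lemma~\ref{AlgKM2} addresses concerns the higher $K_n$ on nonunital rings.)

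The periodicity step, however, does not go through as a retract argument. Such an argument needs a natural transformation $KH_n \to KH_{n+2}$ already defined on $B$ and compatible, under the split inclusion, with the periodicity isomorphism on $B \otimes_{\C} \cK$; but there is no such map on $KH_*(B)$ a priori---that is exactly what you are trying to construct---and there is no reason the periodicity operator on $KH_*(B \otimes_{\C} \cK)$ should preserve the summand $KH_*(B)$. The paper avoids this by exploiting the stronger conclusion you yourself recorded at the outset: Proposition~\ref{basic}, applied to $D \mapsto KH_*(A \otimes_{\C} D)$ on~$\fC^*$, gives an honest \emph{isomorphism} $KH_*(B) \cong KH_*\big(A \otimes_{\C} (\cO \cotimes \cK)\big)$, not merely a split monomorphism into $KH_*(B \otimes_{\C} \cK)$. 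Periodicity is then established directly on the right-hand side by making it a graded module over $KH_*(\cK) \cong \Z[t, t^{-1}]$ via $1_A \otimes_{\C} (1_{\cO} \cotimes \mu)$, where $\mu \colon \cK \cotimes \cK \to \cK$ is the tensor-product-of-operators map; multiplication by $t$ is the periodicity operator. Replacing your retract step with this isomorphism-plus-module argument completes the proof.
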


\begin{proof}
We claim that $A \otimes_{\C} \cO$ is $K^{\inf}$-regular.
To begin with,
Theorem~\ref{T_4129_Kinf}
implies that $K^{\inf}_* (A \otimes_{\C} -)$
is $M_2$-stable.
Proposition~\ref{basic} therefore
shows that the map
$K^{\inf}_* (A \otimes_{\C} \cO)
 \to K^{\inf}_* \big( A \otimes_{\C} (\cO \cotimes \cK) \big)$
is an isomorphism.
This map factors through
$K^{\inf}_* (A \otimes_{\C} \cO \otimes_{\C} \cK).$
Thus
\begin{equation}\label{Eq_4211_KInf}
K^{\inf}_* (A \otimes_{\C} \cO)
 \to K^{\inf}_* (A \otimes_{\C} \cO \otimes_{\C} \cK)
\end{equation}
is injective and has a natural splitting.
The algebra $A \otimes_{\C} \cO \otimes_{\C} \cK$
is $K^{\inf}$-regular by \cite{ctc}*{Theorem 6.5.3~(i)},
that is, the map
\[
K^{\inf}_* (A \otimes_{\C} \cO \otimes_{\C} \cK)
 \to K^{\inf}_* \big( (A \otimes_{\C} \cO \otimes_{\C} \cK)
      [t_1, t_2, \ldots, t_p] \big)
\]
is an isomorphism for all $p \in \Z_{> 0}.$
Using the natural splitting of the map~(\ref{Eq_4211_KInf})
(for $A$ and also with $A [t_1, t_2, \ldots, t_n]$ in place of~$A$)
and the fact that a retract of an isomorphism is an isomorphism,
one deduces that $A \otimes_{\C} \cO$ is also $K^{\inf}$-regular.
This is the claim.

It now follows from
\cite{ctc}*{Lemma 3.2.1~(ii)} and \cite{kh}*{Proposition 1.5~(i)}
that
$K_n (A \otimes_{\C} \cO) \to KH_n (A \otimes_{\C} \cO)$
is an
isomorphism for $n \le 0$.
In view of the sequence \eqref{weseq},
it only remains to show that $KH_* (A \otimes_{\C} \cO)$
satisfies Bott periodicity in~$A$.
By Proposition~\ref{basic}, there is a natural isomorphism
$KH_* (A \otimes_{\C} \cO)
 \cong KH_* (A \otimes_{\C} (\cO \cotimes \cK))$.
So it suffices to show that
$KH_* (A \otimes_{\C} (\cO \cotimes \cK))$
satisfies Bott periodicity in~$A$.
The tensor product of operators gives an isomorphism
\[
\mu \colon \cK (\ell^2 (\Z_{\ge 0})) \otimes \cK (\ell^2 (\Z_{\ge 0}))
 \to \cK (\ell^2 (\Z_{\ge 0} \times \Z_{\ge 0})).
\]
It makes
$KH_* (\cK) \cong K_* (\cK) \cong K^{\top}_* (\cK)$
into a graded ring
isomorphic to the Laurent polynomials $\Z [t, t^{-1}]$,
in which $t$ has degree $2$.
Next,
$1_A \otimes_{\C} (1_{\cO} \cotimes \mu)$
makes $KH_* (A \otimes_{\C} \cO \cotimes \cK)$ into a graded
$KH_* (\cK) = \Z[t,t^{-1}]$-module.
This implies Bott periodicity
for $KH_* (A \otimes_{\C} \cO \cotimes \cK)$.
\end{proof}

\begin{bibdiv}
\begin{biblist}

\bib{kabi}{article}{
   author={Corti{\~{n}}as, Guillermo},
   title={The obstruction to excision in $K$-theory and in cyclic homology},
   journal={Invent. Math.},
   volume={164},
   date={2006},
   number={1},
   pages={143--173},
  issn={0020-9910},
   review={\MR{2207785 (2006k:19006)}},
   doi={10.1007/s00222-005-0473-9},
}

\bib{friendly}{article}{
   author={Corti{\~{n}}as, Guillermo},
   title={Algebraic v. topological $K$-theory: a friendly match},
   conference={
      title={Topics in algebraic and topological $K$-theory},
   },
   book={
      series={Lecture Notes in Math.},
      volume={2008},
      publisher={Springer},
      place={Berlin},
   },
   date={2011},
   pages={103--165},
   review={\MR{2762555 (2012c:19001)}},
   doi={10.1007/978-3-642-15708-0$_3$},
}

\bib{CrTh}{article}{
   AUTHOR = {Corti{\~{n}}as, Guillermo},
   AUTHOR = {Thom, Andreas},
     TITLE = {Bivariant algebraic $K$-theory},
   JOURNAL = {J.~reine angew.\  Math.},
    VOLUME = {610},
      YEAR = {2007},
     PAGES = {71--123},
   review  = {\MR{2359851 (2008i:19003)}},
       doi = {10.1515/CRELLE.2007.068},
     }

\bib{ctc}{article}{
   author={Corti{\~{n}}as, Guillermo},
   author={Thom, Andreas},
   title={Comparison between algebraic and topological $K$-theory
     of locally convex algebras},
   journal={Adv. Math.},
   volume={218},
   date={2008},
   number={1},
   pages={266--307},
   issn={0001-8708},
   review={\MR{2409415 (2009h:46136)}},
   doi={10.1016/j.aim.2007.12.007},
}

\bib{cudocu}{article}{
   author={Cuntz, Joachim},
   title={Bivariante $K$-Theorie f\"ur lokalkonvexe Algebren und der
   Chern-Connes-Charakter},
   journal={Doc. Math.},
   volume={2},
   date={1997},
   pages={139--182 (electronic)},
}

\bib{cubiva}{article}{
   author={Cuntz, Joachim},
   title={Bivariant $K$-theory and the Weyl algebra},
   journal={$K$-Theory},
   volume={35},
   date={2005},
   number={1-2},
   pages={93--137},
}

\bib{cut}{article}{
   author={Cuntz, Joachim},
   author={Thom, Andreas},
   title={Algebraic $K$-theory and locally convex algebras},
   journal={Math. Ann.},
   volume={334},
   date={2006},
   number={2},
   pages={339--371},
}

\bib{GW}{article}{
   author={Geller, Susan C.},
   author={Weibel, Charles A.},
   title={$K_{1} (A, \, B, \, I)$},
   conference={
      title={Algebraic $K$-theory and algebraic number theory
         (Honolulu, HI, 1987)},
   },
   journal={J. Reine Angew. Math.},
   volume={342},
   date={1983},
   pages={12--34},
}

\bib{hig}{article}{
   AUTHOR = {Higson, Nigel},
     TITLE = {Algebraic {$K$}-theory of stable {$C^ *$}-algebras},
   JOURNAL = {Adv. in Math.},
    VOLUME = {67},
      YEAR = {1988},
    NUMBER = {1},
     PAGES = {140},
     }

\bib{sm}{article}{
author={Mahanta, Snigdhayan},
title={Algebraic $K$-theory and $K$-regularity of $\cO$-stable $C^*$-algebras},
status={arXiv:1311.4720}
}

\bib{comparos}{article}{
   author={Rosenberg, Jonathan},
   title={Comparison between algebraic and topological $K$-theory for Banach
   algebras and $C^*$-algebras},
   conference={
      title={Handbook of $K$-theory. Vol. 1, 2},
   },
   book={
      publisher={Springer},
      place={Berlin},
   },
   date={2005},
   pages={843--874},
}

\bib{karcomp}{article}{
   author={Karoubi, Max},
   title={$K$-th\'{e}orie alg\'{e}brique de certaines alg\`{e}bres d'op\'{e}rateurs},
   language={French},
   conference={
      title={Alg\`{e}bres d'op\'{e}rateurs},
      address={S\'{e}m., Les Plans-sur-Bex},
      date={1978},
   },
   book={
      series={Lecture Notes in Math.},
      volume={725},
      publisher={Springer},
      place={Berlin},
   },
   date={1979},
   pages={254--290},
}

\bib{kv}{article}{
   author={Karoubi, Max},
   author={Villamayor, Orlando},
   title={$K$-th\'{e}orie alg\'{e}brique et $K$-th\'{e}orie topologique. I},
   language={French},
   journal={Math. Scand.},
   volume={28},
   date={1971},
   pages={265--307 (1972)},
   issn={0025-5521},
   review={\MR{0313360 (47 \#1915)}},
}

\bib{frechet}{article}{
   author={Phillips, N. Christopher},
   title={$K$-theory for Fr\'{e}chet algebras},
   journal={Internat. J. Math.},
   volume={2},
   date={1991},
   number={1},
   pages={77--129},
   issn={0129-167X},
   review={\MR{1082838 (92e:46143)}},
   doi={10.1142/S0129167X91000077},
}

\bib{sw}{article}{
AUTHOR = {Suslin, Andrei A.},
author={Wodzicki, Mariusz},
     TITLE = {Excision in algebraic {$K$}-theory},
   JOURNAL = {Ann. of Math. (2)},
    VOLUME = {136},
      YEAR = {1992},
    NUMBER = {1},
     PAGES = {51--122},}

\bib{kh}{article}{
   author={Weibel, Charles A.},
   title={Homotopy algebraic $K$-theory},
   conference={
      title={Algebraic $K$-theory and algebraic number theory
         (Honolulu, HI, 1987)},
   },
   book={
      series={Contemp. Math.},
      volume={83},
      publisher={Amer. Math. Soc.},
      place={Providence, RI},
   },
   date={1989},
   pages={461--488},
}

\bib{wodk}{article}{
   author={Wodzicki, Mariusz},
   title={Algebraic $K$-theory and functional analysis},
   conference={
      title={First European Congress of Mathematics, Vol.\ II},
      address={Paris},
      date={1992},
   },
   book={
      series={Progr. Math.},
      volume={120},
      publisher={Birkh\"auser},
      place={Basel},
  },
   date={1994},
   pages={485--496},
}

\end{biblist}
\end{bibdiv}
\end{document}